\newtheorem{theorem}{Theorem}[section]\newtheorem{thm}[theorem]{Theorem}
\newtheorem*{theorem*}{Theorem}
\newtheorem{lemma}{Lemma}[section]
\newtheorem{prop}{Proposition}[section]
\newtheorem{definition}[theorem]{Definition}
\def \b {\beta}
\def\k{\kappa}
\def\a{\alpha}
\def\w{\omega}
\def\l{\lambda}
\def\ol{\overline}
\def\e{\epsilon}
\def\p{\partial}
\def\R{\mathbb{R}}
\def\vp{\varphi}
\def\tr{\operatorname{tr}}
\def\USC{\operatorname{USC}}
\def\LSC{\operatorname{LSC}}
\DeclareMathOperator\erf{erf}
\numberwithin{equation}{section}
\begin{document}

\title[Modulus of Continuity Estimates ]{Modulus of Continuity Estimates for Fully Nonlinear Parabolic Equations}

\author{Xiaolong Li}
\address{Department of Mathematics, University of California, Irvine, Irvine, CA 92697, USA}
\email{xiaolol1@uci.edu}

%\author{Yucheng Tu}
%\address{Department of Mathematics, University of California, San Diego, La Jolla, CA 92093, USA}
%\email{y7tu@ucsd.edu}

%\author{Kui Wang}\thanks{The research of the third author is supported by NSFC No.11601359} 
%\address{School of Mathematical Sciences, Soochow University, Suzhou, 215006, China}
%\email{kuiwang@suda.edu.cn}

%\author{Lei Ni}\thanks{The research is partially supported by  ``Capacity Building for Sci-Tech Innovation-Fundamental Research Funds".   }
%\address{Department of Mathematics, University of California, San Diego, La Jolla, CA 92093, USA}
%\email{lni@math.ucsd.edu}

%\textcolor[rgb]{0.00,0.00,1.00}{}.}
%\subjclass[2020]{}

\subjclass[2010]{Primary: 35K55; Secondary: 35J60, 35D40}
\keywords{Modulus of continuity estimates, fully nonlinear equations, viscosity solutions, gradient estimates}
%}and modulus of continuity estimates.}

\maketitle

\begin{abstract}
    We prove that the moduli of continuity of viscosity solutions to fully nonlinear parabolic partial differential equations
    are viscosity subsolutions of suitable parabolic equations of one space variable. As applications, we obtain sharp Lipschitz bounds and gradient estimates for fully nonlinear parabolic equations with bounded initial data, via comparison with one-dimensional solutions. 
    This work extends multiple results of Andrews and Clutterbuck for quasilinear equations to fully nonlinear equations. 
\end{abstract}

%\tableofcontents

\section{Introduction} 

Given a function $u: \Omega \subset \R^n \to \R$,
any function $f$ satisfying 
$$|u(x)-u(y)| \leq 2 f\left(\frac{|x-y|}{2}\right)$$
for all $x, y \in \Omega$ is called \textit{a modulus of continuity} of $u$. The (optimal) \textit{modulus of continuity} $\w$ of $u$ is defined by 
\begin{equation*}
    \w(s)=\sup \left\{\frac{u(x)-u(y)}{2} : x, y \in \Omega,  |x-y|=2s \right\}. 
\end{equation*}
The definitions here are consistent with \cite{Andrewssurvey12}\cite{Andrewssurvey15}\cite{AC09a}\cite{AC09}, but differ from the usual ones by the factors of $2$, which are included for convenience and nice statements of results. For instance, if $\vp_0(s)$ is an odd function defined on a symmetric interval and it is positive and concave for positive $s$,
then its modulus of continuity is exactly $\w(s)=\vp_0(s)$ for all $s \geq 0$. 
If we then evolve $\vp_0$ as initial data by a quasilinear parabolic equation of one space variable of the form $\vp_t =\a(\vp')\vp''$, then it is easy to see that the solution $\vp(\cdot,t)$ remains odd, concave and positive for positive $s$. 
Therefore the modulus of continuity $\w(s,t)$ of the solution $\vp(s,t)$ is exactly $\w(s,t)=\vp(s,t)$. %Moreover, the equality occurs at all points of the form $x=(x',x_n)$ and $y=(x',-x_n)$ and any $t\geq 0$, where $x' \in \R^{n-1}$ and $x_n \geq 0$. 
% itself for all $s,t\geq 0$.  

Inspired by the above example, Andrews and Clutterbuck \cite{AC09a} first observed that the modulus of continuity of a regular ($C^2$ in space variable and $C^1$ in time) periodic solution of the quasilinear parabolic equation $\vp_t =\a(\vp')\vp''$ is a subsolution of the same equation. Their proof is inspired by an argument of doubling variables used by Kru\v{z}kov \cite{Kruzhkov67} for linear parabolic equations of one space variable. 
In \cite{AC09}, they managed to generalize this to higher dimensions, showing that for a wide class of quasilinear parabolic equations including the anisotropic mean curvature flows, the modulus of continuity of a periodic regular solution is a subsolution of an associated one-dimensional equation. 
Moreover, the results are sharp in the sense that initial data close to a square-wave function of one of the variables will give equality in the limit of lattices with large period. 
Equations with Dirichlet or Neumann boundary conditions were also treated in \cite{AC09} with convexity assumptions on the domain. As applications, they obtained time-interior gradient estimates (more precisely,
estimates on the gradient for positive times which do not depend on the initial gradient, but only on the oscillation of the initial data) for solutions of quasilinear parabolic equations with gradient-dependent coefficients, under the weakest possible assumptions on the coefficients. 
Such estimates have not yet been accomplished using direct estimates on the gradient except in some special cases \cite{Clutterbuck07}. 
Later on, the modulus of continuity estimates were extended to quasilinear isotropic equations on Riemannian manifolds in \cite{AC13}\cite{AN12}\cite{Ni13}
and to viscosity solutions in \cite{Li16}\cite{LW17}. 
At last, we would like to mention that the modulus of continuity and its variants have found remarkable applications in proving sharp lower bounds for lower eigenvalues of the Laplacian in \cite{AC11}\cite{AC13}\cite{AN12}\cite{DSW18}\cite{HW17}\cite{Ni13}\cite{SWW19}, the $p$-Laplacian in \cite{Andrewssurvey15} and the weighted $p$-Laplacian in \cite{LW19eigenvalue}\cite{LW19eigenvalue2}.
We refer the reader to the nice surveys by Professor Andrews \cite{Andrewssurvey12}\cite{Andrewssurvey15}, where these ideas were further explained, various applications are discussed, and connections to other problems in geometric analysis are made.

%All the above-mentioned results are for linear and quasilinear parabolic equations. The goal of the present paper is to investigate the question of improving the modulus of continuity for fully nonlinear parabolic equations: if $u:\Omega \times [0,T) \to \R$ satisfies a fully nonlinear parabolic equation, then how does its modulus of continuity $w(s,t)$ evolve? 
%The above example turns out to be the prototype of many results to be proved in this paper. 

%In recent years, the estimates of modulus of continuity for solutions to linear and quasilinear parabolic partial differential equations have been studied by many authors in various settings including %\cite{Andrewssurvey12}\cite{Andrewssurvey15}\cite{AC09a}\cite{AC09}\cite{AX19}\cite{Li16}\cite{LW17}, and 

%The result is sharp in view of the example mentioned in the last paragraph. 

The purpose of the present paper is to extend the above-mentioned modulus of continuity estimates for linear and quasilinear parabolic equations to fully nonlinear parabolic equations. 
We will show that the moduli of continuity of viscosity solutions to fully nonlinear parabolic equations are viscosity subsolutions of suitable parabolic equations of one space variable (see Theorems \ref{thm MC main}, \ref{thm MC Neumann} and \ref{thm MC mfd} for precise statements). In contrast to the quasilinear case, the one-dimensional equations are determined by a structure condition (see \eqref{eq 1.3 structure} below) that we introduce on the fully nonlinear operator, rather than canonically associated.
The structure condition specifies all the properties that need to be satisfied by the one-dimensional operators, and we then provide numerous examples (see Propositions \ref{prop identify 1D}, \ref{prop 1D identify} and \ref{prop identify 1D mfd}) to illustrate how to choose a natural one-dimensional operator for the given fully nonlinear operator.

%Let $\Omega \subset \R^n$ be a domain (open and connected set). 
Consider parabolic partial differential equations of the form 
\begin{equation*}
    u_t +F(t,x,u,Du,D^2 u)=0,
\end{equation*}
on a domain $\Omega \subset \R^n$, where $F:[0,T] \times \Omega \times \R \times \R^n \times S(n) \to  \R$,  $S(n)$ is the set of symmetric $n \times n$ matrices, $u_t$ is the time derivative of $u$, $Du$ is the gradient of $u$, and $D^2u$ is the Hessian of $u$. 
Throughout the paper we assume that $F$ is degenerate elliptic, i.e., 
\begin{equation*}
    F(t,x,r,p,X) \leq F(t,x,r,p,Y) \text{ whenever }  Y\leq X. 
\end{equation*}
We also assume $F$ is a continuous function of its arguments, so the basic theory of viscosity solutions in \cite{CIL92} applies.

A fundamental difficulty in proving the modulus of continuity estimates for fully nonlinear equations is that it is not clear how to identify the one-dimensional operators. In the quasilinear isotropic case considered in \cite{AC09a} and \cite{AC09}, the one-dimensional operator is obtained by plugging into a function that depends only on one of the variables. For example, the associated one-dimensional operator of the Laplacian ($F=-\tr(X)$) is the one-dimensional Laplacian ($f(\vp)=-\vp''$).
%and the associated one-dimensional operator of the $p$-Laplacian ($F(q,X)=-|q|^{p-2}\left(\tr(X) -(p-2)X(q/|q|,q/|q|) \right)$) is the one-dimensional $p$-Laplacian ($f(\vp)=-(p-1)|\vp'|^{p-2}\vp'' $).
However, if the operator has lower order terms depending on $x$ (say $F=-\tr(X) +h(x)$ for some function $h(x)$) or its coefficients depending on $x$ (say $F=-\tr(AX)$ for $A=(a_{ij}(x)$), one certainly cannot identify its one-dimensional operator by plugging a solution of one space variable, 
not to mention for more general fully nonlinear operators. 
To overcome this difficulty, we introduce a structure condition on $F$ as follows. 

Given an operator $F(t,x,r,p,X)$, let 
$f(t,s,\vp,\vp',\vp'')$ be a one-dimensional operator (also degenerate elliptic and continuous of its arguments) such that
\begin{equation}\label{eq 1.3 structure} \nonumber
    \begin{cases}
    & F\left(t,y,r,\vp'\frac{x-y}{|x-y|}, Y\right) -F\left(t,x,v, \vp'\frac{x-y}{|x-y|}, X\right) \leq -2f(t, s, \vp,\vp',\vp'') \\ \nonumber
     \tag{SC} & \text{ for all } 
    x, y\in \Omega \text{ with } |x-y|=2s>0, v,r \in \R \text{ with } v-r=2\vp >0, \\ \nonumber
    & \text{ and } X, Y \in S(n) \text{ satisfying }
     \begin{pmatrix}
    X & 0 \\ 0 & -Y 
    \end{pmatrix}
    \leq D^2_{x,y} \left(2\vp\left(\frac{|x-y|}{2}, t\right) \right),
    \end{cases}
\end{equation}
where all derivatives of $\vp$ are evaluated at $s=\frac{|x-y|}{2}$ and $D^2_{x,y}$ means taking the Hessian with respect to all spatial variables.

The above structure condition is inspired by the structure condition in \cite[(3.13) and (3.14)]{CIL92}, under which the comparison principle is proved for a large class of operators. Also, it will be clear in the proofs that such a structure condition is exactly what we need. 
Indeed, the one-dimensional operator $f$ is in some sense a modulus of continuity of $F$, as it measures the change of $F$ when its arguments change under the constrains specified above. At last, such one-dimensional operators exist for very general $F$, and 
we will discuss how to choose $f$ for a large class of $F$ in Proposition \ref{prop identify 1D} after stating our main result below.  

%We are ready to state the main result of this paper. 
\begin{thm}\label{thm MC main}
Suppose $u:\R^n \times [0,T) \to \R$ is a spatially periodic viscosity solution of 
\begin{equation}\label{eq 1.1 fully nonlinear}
    u_t +F(t,x,u,Du,D^2 u)=0.
\end{equation}
Let $f(t, s,\vp, \vp',\vp'')$ be a one-dimensional operator satisfying the structure condition \eqref{eq 1.3 structure}.
Then the modulus of continuity 
\begin{equation*}
    \w(s,t):=\sup\left\{\frac{u(x,t)-u(y,t)}{2} : |x-y|=2s \right\}
\end{equation*}
of $u$ is a viscosity subsolution of the one-dimensional equation 
\begin{equation}\label{eq 1.4 1D}
    \w_t + f(t, s,\w, \w', \w'')=0
\end{equation}
on $(0,\infty)\times (0,T)$. 
\end{thm}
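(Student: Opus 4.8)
The plan is to prove the viscosity subsolution property by contradiction, using the doubling-of-variables technique adapted to the parabolic setting, in the spirit of Kru\v{z}kov and of Andrews--Clutterbuck \cite{AC09a}\cite{AC09}, but working with viscosity test functions rather than classical derivatives. Suppose $\w$ fails to be a viscosity subsolution of \eqref{eq 1.4 1D}. Then there is a point $(s_0,t_0)\in(0,\infty)\times(0,T)$ and a smooth function $\vp(s,t)$ touching $\w$ from above at $(s_0,t_0)$ — i.e. $\vp\geq\w$ near $(s_0,t_0)$ with equality there — such that $\vp_t(s_0,t_0)+f(t_0,s_0,\vp,\vp',\vp'')>0$. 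After a standard localization/perturbation (replacing $\vp$ by $\vp+\delta(|s-s_0|^2+|t-t_0|^2)-\eta(t-t_0)$ type corrections, or adding a small negative multiple of $(t-t_0)$), I may assume the strict inequality persists on a neighborhood and that $\vp$ touches $\w$ strictly from above, with $\w\le\vp$ globally outside a compact set by periodicity considerations. I will also want $\vp$ to be odd in $s$ after reflecting, or at least defined suitably for $s>0$; the key structural fact is that $2\vp(|x-y|/2,t)$ is then a natural function on $\Omega\times\Omega\times(0,T)$.

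The core of the argument is then to exploit the definition of $\w$: at $(s_0,t_0)$, since $\w(s_0,t_0)=\sup\{(u(x,t_0)-u(y,t_0))/2 : |x-y|=2s_0\}$, I consider the function
\begin{equation*}
\Phi(x,y,t) = u(x,t)-u(y,t) - 2\vp\!\left(\frac{|x-y|}{2},t\right)
\end{equation*}
on $\Omega\times\Omega\times(0,T)$. By the touching condition $\w\le\vp$, we have $\Phi\le 0$ everywhere, and at time $t_0$ the supremum $\sup_{x,y}\Phi(\cdot,\cdot,t_0)=0$ is attained (or nearly attained, after adding small penalization terms $-\eps(|x|^2+|y|^2)$ killed by periodicity, so genuinely attained at some $(\bar x,\bar y)$ with $|\bar x-\bar y|=2s_0$). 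Because of the strict inequality pushed into $\vp$, this maximum of $\Phi$ over space-time is attained at an interior time $\bar t$ close to $t_0$ and at points $\bar x\neq\bar y$ (the case $\bar x=\bar y$ is excluded since $s_0>0$ and one arranges $\vp>0$ there). Now $u$ is a viscosity solution, and $2\vp(|x-y|/2,t)$, as a function of $(x,y,t)$, is smooth near $(\bar x,\bar y,\bar t)$ since $\bar x\ne\bar y$.

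The next step is to apply the parabolic version of the theorem on sums (the Crandall--Ishii lemma, \cite{CIL92}) at the maximum point $(\bar x,\bar y,\bar t)$ of $\Phi$. This produces a real number $a$ (the common time-derivative, with $a = \vp_t(s_0,t_0)+$ lower-order corrections from the penalizations, essentially $\partial_t(2\vp(|x-y|/2,t))$ at the point, split appropriately) and matrices $X,Y\in S(n)$ with
\begin{equation*}
\begin{pmatrix} X & 0 \\ 0 & -Y \end{pmatrix} \le D^2_{x,y}\!\left(2\vp\!\left(\frac{|x-y|}{2},t\right)\right)\Big|_{(\bar x,\bar y,\bar t)}
\end{equation*}
such that $(a_1, \vp'\,\tfrac{\bar x-\bar y}{|\bar x-\bar y|}, X)$ lies in the parabolic superjet of $u$ at $(\bar x,\bar t)$ and $(-a_2, \vp'\,\tfrac{\bar x-\bar y}{|\bar x-\bar y|}, Y)$ lies in the parabolic subjet of $u$ at $(\bar y,\bar t)$, with $a_1+a_2 = a$. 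The viscosity solution inequalities for $u$ at these two points read, schematically, $a_1 + F(\bar t,\bar x,u(\bar x,\bar t),\vp'\tfrac{\bar x-\bar y}{|\bar x-\bar y|},X)\le 0$ and $-a_2 + F(\bar t,\bar y,u(\bar y,\bar t),\vp'\tfrac{\bar x-\bar y}{|\bar x-\bar y|},Y)\ge 0$. Subtracting, $a \le F(\bar t,\bar y,u(\bar y,\bar t),\ldots,Y) - F(\bar t,\bar x,u(\bar x,\bar t),\ldots,X)$. Since $r = u(\bar y,\bar t)$, $v=u(\bar x,\bar t)$ satisfy $v-r=2\vp(s_0,\ldots)>0$ (from the maximum being $0$), and $|\bar x-\bar y|=2s_0$, the structure condition \eqref{eq 1.3 structure} applies and bounds the right-hand side above by $-2f(\bar t,s_0,\vp,\vp',\vp'')$. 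Hence $a + 2f(\ldots)\le 0$, which — after tracking that $a$ converges to (a quantity controlled by) $2\vp_t(s_0,t_0)$ and $f$ is continuous, so the penalization corrections vanish in the limit — contradicts the strict inequality $\vp_t(s_0,t_0)+f(t_0,s_0,\vp,\vp',\vp'')>0$ arranged at the start.

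The main obstacle, and the step requiring the most care, is the bookkeeping around the doubling-of-variables limit: ensuring the maximum of $\Phi$ (with penalizations) is attained at an interior time and at \emph{distinct} points $\bar x\ne\bar y$ so that $2\vp(|x-y|/2,t)$ is smooth there and the Crandall--Ishii lemma applies cleanly; handling the spatial periodicity so that suprema are attained without true compactness of $\R^n$; and verifying that the matrix inequality output by the theorem on sums is precisely the one appearing in the hypothesis of \eqref{eq 1.3 structure} — i.e. that the Hessian $D^2_{x,y}(2\vp(|x-y|/2,t))$ is exactly the comparison matrix, with $\vp',\vp''$ evaluated at $s=|\bar x-\bar y|/2$. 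A secondary subtlety is justifying that one may test the subsolution property of $\w$ only at interior points $s_0>0$ and that the odd/concave structure (or the general maximum-point structure) guarantees $\vp(s_0,t_0)=\w(s_0,t_0)>0$, needed so that the "$v-r=2\vp>0$" clause of \eqref{eq 1.3 structure} is genuinely in force. Once these points are arranged, the contradiction is immediate from the structure condition, which is designed precisely to make this subtraction work.
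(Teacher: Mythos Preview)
Your proposal is correct and takes essentially the same approach as the paper: double variables to form $Z(x,y,t)=u(x,t)-u(y,t)-2\vp(|x-y|/2,t)$, apply the parabolic theorem on sums at its maximum, subtract the two viscosity inequalities for $u$, and invoke \eqref{eq 1.3 structure}. The paper's execution is more direct than yours: it does not argue by contradiction or introduce penalizations, since spatial periodicity of $u$ guarantees the supremum defining $\w(s_0,t_0)$ is \emph{attained} at some $(x_0,y_0)$ with $|x_0-y_0|=2s_0$, so $Z$ has a local maximum exactly at $(x_0,y_0,t_0)$ and Theorem~\ref{thm max prin semicts} applies there directly --- yielding $b_1+b_2=2\vp_t(s_0,t_0)$ on the nose, with the only limit being $\l\to 0^+$ in the matrix inequality.
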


The same conclusion as in Theorem \ref{thm MC main} holds if $u$ solves \eqref{eq 1.1 fully nonlinear} on a bounded convex domain $\Omega \subset \R^n$ with smooth boundary and satisfies the Neumann boundary condition. This will be proved in Section \ref{Sec Neumann}. The Dirichlet boundary condition can also be handled with more restrictions on both $F$ and $\w$. 

The structure condition \eqref{eq 1.3 structure} specifies all the requirements for the one-dimensional operator, but does not provide any choice of it directly. 
So we provide in the next proposition some natural choices of the one-dimensional operators according to the given $F$. More examples will be given in Section \ref{Sec 1D}.

%In contrast to the quasi-linear case in \cite{AC09a}\cite{AC09}, there does not seem to be a canonical or natural choice for the one-dimensional operator $f$ when $F$ is fully nonlinear. Here we provide some examples to illustrate how to choose $f$ for $F$ to make good use of Theorem \ref{thm MC main}.  

\begin{prop}\label{prop identify 1D}
The following pairs of operators $F$ and $f$ satisfy the structure condition \eqref{eq 1.3 structure}. 
\begin{enumerate}
\item $F$ is the linear elliptic operator given by \begin{equation*}
    F(x,r,p,X) =-\tr(X) -\langle W(x), p \rangle -V r -h(x),
\end{equation*}
where $W$ is a bounded continuous vector field, $h$ is a bounded continuous function, and $V \in \R$. 
\begin{equation*}
    f(\vp, \vp',\vp'')=-\vp'' -K |\vp'| - V \vp -\w_h,
\end{equation*}
where $K=\sup_{x}|W(x)|$ and $w_h$ is a modulus of continuity of $h$. 
%\item 
%    \begin{equation*}\label{}
%        F(p,X)=-\tr \left[ \left(I- \frac{p \otimes p}{1+|p|^2}\right) X \right],
%\end{equation*}
%    \begin{equation*}
%%        f=-\frac{\vp''}{1+(\vp')^2}.
 %   \end{equation*}
    \item $F$ is the quasilinear isotropic operator given by 
    \begin{equation*}\label{}
        F(p,X)=-\tr \left[ \left(\a(|p|)\frac{p \otimes p}{|p|^2} +\b(|p|) \left(I- \frac{p \otimes p}{|p|^2}\right) \right) X \right],
\end{equation*}
    where $\a$ and $\b$ are nonnegative functions, $I$ denotes the identity matrix and $p \otimes q$ denotes the matrix whose $(i,j)$ entry is $p_iq_j$.
    \begin{equation*}
        f(\vp,\vp'')=-\a(|\vp'|)\vp''.
    \end{equation*}
     \item $F(X)$ is uniformly elliptic with ellipticity constants $0<\l \leq \Lambda < \infty$, i,e., 
     \begin{equation*}
         -\Lambda \tr(Z) \leq F(X+Z)-F(X) \leq -\l \tr(Z) \text{ for } Z \geq 0.
     \end{equation*}
     \begin{equation*}
         f(\vp'')=-\l \vp''. 
     \end{equation*}
     \item $F(t,r,p,X)$ (independent of $x$) is proper, i.e., 
     \begin{equation*}
         F(t,r,p,X) \leq F(t,v,p,Y) \text{ whenever } r\leq v, Y \leq X.  
     \end{equation*}
     \begin{equation*}
         f \equiv 0.
     \end{equation*}
     
     \item $F(t,x,r,p,X)$ satisfies that for all $x,y\in \Omega$, $t\in [0,T]$, $v,r \in \R$ and $X,Y,Z \in S(n)$,
     \begin{align*}
         |F(t,y,r,p,X) -F(t,x,r,p,X)| & \leq L |x-y|, \\
          F(t,x,r,p,X) -F(t,x,v,p,X) & \leq K (v-r) \text{ for } v\geq r,\\
          F(t,x,r,p,X+Z) -F(t,x,r,p,X) & \leq -\lambda(|p|,t) \tr(Z) \text{ for } Z\geq 0, 
     \end{align*}
     where $L, K$ are positive constants and $\l(s,t)$ is a nonnegative function.
     \begin{equation*}
         f(s,\vp,\vp',\vp'')=-\l(|\vp'|,t)\vp'' -K \vp -L s .
     \end{equation*}
     %\item $F(t,x,r,p,X)$ is uniformly elliptic with ellipticity constants $0<\l \leq \Lambda < \infty$, namely for all $t,x,r,p$, it holds that
    %\begin{equation*}
    %    \l \tr(Z) \leq F(t,x,r,p,X-Z) -F(t,x,r,p,X) \leq \Lambda \tr(Z) \text{ for } Z\geq 0. 
    %\end{equation*}
    %\begin{equation*}
    %    f=-\l \vp''.
    %\end{equation*}
    %\item 
    %\begin{equation*}
    %    F(p,X)=\tr \left[\left(I-\frac{p \otimes p}{|p|^2} \right) X \right],
    %\end{equation*}
    %\begin{equation*}
    %    f\equiv 0. 
    %\end{equation*}
    %\item $F(t,r,p,X)$ (independent of $x$) is a proper operator and $f\equiv 0$.  
\end{enumerate}
\end{prop}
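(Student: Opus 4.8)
The plan is to verify the structure condition \eqref{eq 1.3 structure} for all five pairs by reducing each to the same elementary computation. Fix $x,y\in\Omega$ with $|x-y|=2s>0$ and $v,r\in\R$ with $v-r=2\vp>0$, and set $e:=\frac{x-y}{|x-y|}$. The first step is to evaluate the Hessian on the right of \eqref{eq 1.3 structure}: writing $\psi(x,y):=2\vp\big(\tfrac{|x-y|}{2},t\big)=g(|x-y|)$ with $g(\rho)=2\vp(\rho/2,t)$, so that $g'(\rho)=\vp'(\rho/2,t)$ and $g''(\rho)=\tfrac12\vp''(\rho/2,t)$, the standard formula for the Hessian of a radial function of $x-y$ yields, with all derivatives of $\vp$ evaluated at $s$,
\[
D^2_{x,y}\psi=\begin{pmatrix} B & -B\\ -B & B\end{pmatrix},\qquad B=\tfrac12\,\vp''\,e\otimes e+\frac{\vp'}{2s}\,(I-e\otimes e).
\]
From the matrix inequality in \eqref{eq 1.3 structure} I then read off the two facts used throughout. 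Testing against vectors $(\xi,\xi)$ annihilates the right-hand side — whose quadratic form on $(\xi,\eta)$ equals $\langle B(\xi-\eta),\xi-\eta\rangle$ — and gives $X\le Y$; testing against $(e,-e)$ gives $\langle Xe,e\rangle-\langle Ye,e\rangle\le 4\langle Be,e\rangle=2\vp''$, i.e.\ $\langle(Y-X)e,e\rangle\ge-2\vp''$. Choosing an orthonormal basis $e_1=e,e_2,\dots,e_n$, summing, and using $\langle(Y-X)e_i,e_i\rangle\ge0$ (from $X\le Y$) for the other directions, I obtain $\tr(Y-X)\ge-2\vp''$, with the finer statement that the $e$-diagonal entry of $Y-X$ is $\ge-2\vp''$ and all remaining diagonal entries are $\ge0$.

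With these in hand the five items are direct. In (3) — and for the last term of (5) — set $Z:=Y-X\ge0$ and use $F(Y)-F(X)=F(X+Z)-F(X)\le-\lambda\tr Z\le2\lambda\vp''=-2f$. In (1) I expand
\[
F(y,r,\vp' e,Y)-F(x,v,\vp' e,X)=-\tr(Y-X)-\vp'\langle W(y)-W(x),e\rangle+2V\vp+\big(h(x)-h(y)\big),
\]
and bound the four terms by $2\vp''$, $2K|\vp'|$, $2V\vp$, $2\w_h(s)$ respectively (the last using $|h(x)-h(y)|\le2\w_h(\tfrac{|x-y|}{2})$), whose sum is exactly $-2f$. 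In (2), with $p=\vp' e$ one has $F(p,Y)-F(p,X)=-\tr(MZ)$ where $M=\a(|\vp'|)\,e\otimes e+\b(|\vp'|)\,(I-e\otimes e)\ge0$, and $\tr(MZ)=\a(|\vp'|)\langle Ze,e\rangle+\b(|\vp'|)\sum_{i\ge2}\langle Ze_i,e_i\rangle\ge-2\a(|\vp'|)\vp''$, which is the claim; the degenerate case $\vp'=0$ is covered by continuity of $F$ at $p=0$. In (4), since $v\ge r$ and $X\le Y$, properness gives $F(t,r,\vp' e,Y)\le F(t,v,\vp' e,X)$, so the left side of \eqref{eq 1.3 structure} is $\le0=-2f$. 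Finally (5) follows by the telescoping
\begin{align*}
F(t,y,r,\vp' e,Y)-F(t,x,v,\vp' e,X)&=\big[F(t,y,r,\vp' e,Y)-F(t,x,r,\vp' e,Y)\big]\\
&\quad+\big[F(t,x,r,\vp' e,Y)-F(t,x,v,\vp' e,Y)\big]\\
&\quad+\big[F(t,x,v,\vp' e,Y)-F(t,x,v,\vp' e,X)\big],
\end{align*}
bounding the three brackets by $2Ls$, $2K\vp$, and $-\lambda(|\vp'|,t)\tr(Y-X)\le2\lambda(|\vp'|,t)\vp''$ via the three hypotheses in order. In each case $f$ is manifestly degenerate elliptic and continuous in its arguments, hence an admissible one-dimensional operator.

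I do not expect a genuine obstacle here: all of the content sits in the one-line Hessian computation of $D^2_{x,y}\psi$ and in extracting $X\le Y$ and $\tr(Y-X)\ge-2\vp''$ from the matrix inequality, after which each case is bookkeeping with the structural hypotheses on $F$ (and, for (1), the definition of a modulus of continuity). The only points that want a word of care are the degenerate direction $\vp'=0$ in (2), handled by continuity of $F$ at $p=0$, and keeping the signs straight in the telescoping decomposition for (5).
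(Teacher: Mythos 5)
Your proposal is correct and follows essentially the same route as the paper: the explicit Hessian of $2\vp\left(\frac{|x-y|}{2},t\right)$, the consequences $X\le Y$ and $\langle (Y-X)e,e\rangle \ge -2\vp''$ (hence $\tr(X-Y)\le 2\vp''$), and then the identical case-by-case bookkeeping (same expansion in (1), same telescoping in (5), properness plus $X\le Y$ in (4)). The only cosmetic difference is that you obtain these matrix facts by testing the block inequality against $(\xi,\xi)$ and $(e,-e)$, while the paper's Lemma 4.1 and its part (2) computation use the auxiliary positive semidefinite matrix $\begin{pmatrix} A & C \\ C & A\end{pmatrix}$ trick with $C=I-2e\otimes e$ (resp.\ weighted by $\a,\b$); your directional refinement yields part (2) directly and is equivalent.
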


Part (1) of Proposition \ref{prop identify 1D} says that $f$ is linear if $F$ is linear. It implies exponentially gradient bounds for linear equations because the one-dimensional can be solved explicitly (choosing $\w_h=\frac{1}{2}\sup_{x}|h(x)|$); see \cite{Balch20}. 

Applying Theorem \ref{thm MC main} to the operators in Part (2) covers the main results obtained by Andrews and Clutterbuck in \cite{AC09a} and \cite{AC09} for quasilinear equations. Note that quasilinear isotropic operators in Part (2) include many greatly studied elliptic operators such as the Laplacian (with $\a=\b=1$), the $p$-Laplacian (with $\a(s)=(p-1)|s|^{p-1}$ and $\b(s)=|s|^{p-1}$), and graphical mean curvature operator (with $\a(s)=\frac{1}{1+s^2}$ and $\b=1$). 
%one in the quasilinear case to demonstrate that we can cover previous results for quasilinear equations, and the other one in the fully nonlinear case to illustrate that $f$ can be chosen to be quasilinear as long as $F$ is strictly elliptic. More examples will be discussed in Section ?

Part (3) implies that $f$ can be chosen to be linear as long as $F$ is uniformly elliptic. In particular, if $F$ is the Pucci's extremal operators $-M^{\pm}_{\l,\Lambda}(D^2 u)$ with ellipticity constants $0<\l \leq \Lambda < \infty$, then $f$ can be chosen to be $f(\vp'')=-\l \vp''$. 
As we will see in Section \ref{Sec Lip}, uniformly elliptic fully nonlinear operators behave just like uniformly elliptic linear operators in terms of Lipschitz bounds and time-interior gradient estimates. 
%This is consistent with the elliptic regularity theory for fully nonlinear uniformly elliptic equations in \cite{CCbook95}. 

Part (4) implies that if $F$ is independent of $x$ and proper, then the parabolic equation \eqref{eq 1.1 fully nonlinear} preserves any initial modulus of continuity for spatially periodic solutions (or with Neumann boundary condition), i.e., if $\vp(s)$ is a modulus of continuity for $u(\cdot, 0)$, then it is also a modulus of continuity for $u(x,t)$ whenever the solution exists. 

Part (5) addresses the more general situation where the ellipticity constants may depend on the norm of the gradient and  and $F(t,x,v,p,X)$ is Lipschitz in $x$ and $v$. 
Examples of such operators include the $p$-Laplacian $\Delta_p$, $-\l(|Du|) \Delta u$ and $-|Du|^\gamma M^{\pm}_{\l,\Lambda}(D^2 u)$. 
In these cases, the one-dimensional equations can be chosen to be quasilinear.

We conclude this section by mentioning various extensions and applications of Theorem \ref{thm MC main}, as well as discussing the organization of this paper. 
In Section 2, we recall the definitions of viscosity solutions for parabolic equations and state the parabolic maximum principle for semicontinuous functions, which is the key tool that we use in this paper. 
We then present the proofs of Theorem \ref{thm MC main} and Proposition \ref{prop identify 1D} in Sections 3 and 4, respectively. 
In Section 5, we prove modulus of continuity estimates when the Neumann boundary condition is imposed. 
Section 6 is devoted to proving Lipschitz bounds and gradient estimates for solutions to parabolic equations with bounded initial data, as applications of the modulus of continuity estimates. 
%In Section 7, we prove analogous modulus of continuity estimates for fully nonlinear elliptic equations. As application, we derive gradient estimates for elliptic equations in Section 8.
In Section 7, we extend the modulus of continuity estimates to fully nonlinear parabolic equations on Riemannian manifolds. 
In Section 8, we study the effects of curvatures on the one-dimensional operators. 

\section{Preliminaries on Viscosity Solutions} 
In this section, we collect some basics on the theory of viscosity solutions that will be needed in the sequel. The reader is encouraged to consult \cite{CIL92} for a self-contained exposition of the basic theory of viscosity solutions.

Let $\Omega\subset \R^n$ be an open set and $O$ be an open subset of $\Omega \times (0,T)$.  The following notations are useful:
\begin{align*}
    \USC(O) & =\left\{u:O \to \R |\  u \mbox{  is upper semicontinuous }\right\},\\
    \LSC(O) & =\left\{u:O \to \R |\  u \mbox{  is lower semicontinuous }\right\}.
\end{align*}

Next we introduce the notion of parabolic semijets. 
\begin{definition}
\begin{enumerate}
    \item For a function $u\in \USC(O)$, the second order parabolic superjet of $u$ at a point $(x_0,t_0) \in O$ is defined by 
\begin{align*}
    \mathcal{P}^{2,+}u(x_0,t_0)  =\{& \left(\vp_t(x_0,t_0), D\vp(x_0,t_0), D^2 \vp (x_0,t_0) \right) : \\
    & \vp \in C^{\infty}(O) \text{ such that } u-\vp \text{ attains a local maximum at } (x_0,t_0)\}. 
\end{align*}
\item For $u\in \LSC(O)$, the second order parabolic subjet of $u$ at $(x_0,t_0) \in O$ is defined by 
\begin{align*}
    \mathcal{P}^{2,-}u(x_0,t_0) = - \mathcal{P}^{2,+}(-u)(x_0,t_0) .
\end{align*}
\item We also define the closures of $\mathcal{P}^{2,+}u(x_0,t_0) $ and $\mathcal{P}^{2,-}u(x_0,t_0) $ by 
\begin{align*}
\overline{\mathcal{P}}^{2,+}u(z_0)
=\{ & (\tau,p,X)\in \R \times \R^n \times S(n) | 
\mbox{  there is a sequence  } (z_j,\tau_j,p_j,X_j) \\
&\mbox{  such that  } (\tau_j,p_j ,X_j)\in \mathcal{P}^{2,+}u(z_j) \\
&\mbox{  and  } (z_j,u(z_j),\tau_j,p_j,X_j) \to (z_0,u(z_0),\tau, p ,X) \mbox{  as  } j\to \infty \}.  \\
\overline{\mathcal{P}}^{2,-}u(z_0)&=-\overline{\mathcal{P}}^{2,+}(-u)(z_0). 
\end{align*}
\end{enumerate}
\end{definition}

Now we can give the definition of a viscosity solution for the general equation
\begin{equation} \label{nonsingular equation}
u_t+F(t, x, u, D u, D^2 u)=0,
\end{equation} 
where $F$ is assumed to be degenerate elliptic and continuous in its arguments. 

\begin{definition}\label{def viscosity}
\begin{enumerate}
    \item A function $u \in \USC(O)$ is a viscosity subsolution of \eqref{nonsingular equation}
in $O$ if for all $(x,t)\in O$ and $(t,p,X)\in \mathcal{P}^{2,+}u(x,t) $,
\begin{align*}
\tau + F(t,x,u(x,t),p, X) \leq 0. 
\end{align*}
\item  A function $u \in \LSC(O)$ is a viscosity supersolution of \eqref{nonsingular equation}
in $O$ if for any $(x,t) \in O$ and $(t,p,X)\in \mathcal{P}^{2,-}u(x,t) $,
\begin{align*}
\tau + F(t,x,u(x,t),p, X) \geq 0.
\end{align*}
\item A viscosity solution of \eqref{nonsingular equation} in $O$ is a continuous function which is both a viscosity subsolution and a viscosity supersolution of \eqref{nonsingular equation} in $O$. 
\end{enumerate}
\end{definition}

We state the parabolic maximum principle for semicontinuous functions (see \cite[Theorem 8.3]{CIL92}), which is the key tool in proving the modulus of continuity estimates. 
\begin{thm}\label{thm max prin semicts}
Let $u_i \in  \USC(O_i \times (0, T))$ for $i = 1, ... , k$, where $O_i$ is a locally compact subset of $\R^{n_i}$. Let $\vp(x_1, \cdots, x_k, t)$ be a smooth function defined on an open neighborhood of $O_1 \times \cdots \times O_k \times (0,T)$. 
Suppose the function 
\begin{equation*}
    w(x_1, \cdots, x_k, t) :=u_1(x_1, t) + \cdots u_k(x_k, t) -\vp(x_1, \cdots, x_k, t)
\end{equation*}
attains a maximum at $(\hat{x}_1, \cdots, \hat{x}_k, \hat{t}) $ on $O_1 \times \cdots \times O_k \times (0,T)$. Assume further that there is an $r >0$  such that for every $\eta >0$ there is a $C>0$ such that for $i=1, \cdots, k$
\begin{align*}
& b_i \leq C  \mbox{  whenever  } (b_i,q_i,X_i) \in \mathcal{P}^{2,+}u_i(t,x_i) , \\
& |x_i- \hat{x}_i|+|t-\hat{t}| \leq r \mbox{  and  } |u_i(x_i,t)|+|q_i| +\|X_i\| \leq \eta.
\end{align*}
Then for each $\lambda>0$, there are $X_i \in S(n)$ such that
\begin{align*}
& (b_i,D_{x_i}\vp(\hat{x}_1, \cdots, \hat{x}_k, \hat{t}),X_i)  \in \ol{\mathcal{P}}^{2,+}u_i(\hat{t},\hat{x}_i),\\
&  -\left(\frac 1 \lambda +\left\|M\right\| \right)I \leq
    \begin{pmatrix}
   X_1 & \cdots & 0 \\
   \vdots & \ddots & \vdots \\
   0 & \cdots & X_k
   \end{pmatrix}
   \leq M+\lambda M^2,  \\
& b_1 + \cdots + b_k =\vp_t(\hat{x}_1, \cdots, \hat{x}_k,\hat{t}),
\end{align*}
where $M=D^2\vp(\hat{x}_1, \cdots, \hat{x}_k,\hat{t})$ and $\|M\|=\sup\{M(v,v) : \|v\|=1 \}.$
\end{thm}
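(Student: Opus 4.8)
The plan is to prove Theorem \ref{thm MC main} by a doubling-of-variables argument that reduces, via Theorem \ref{thm max prin semicts}, to the pointwise structure condition \eqref{eq 1.3 structure}. Let me set up the argument carefully. To show $\w$ is a viscosity subsolution of \eqref{eq 1.4 1D} on $(0,\infty)\times(0,T)$, I fix a smooth test function $\vp$ touching $\w$ from above at an interior point $(s_0,t_0)$ with $s_0>0$, $t_0\in(0,T)$; I may assume this is a strict local maximum of $\w-\vp$ (perturb $\vp$ by $\e(|s-s_0|^4+|t-t_0|^2)$) and that $\w-\vp<0$ away from $(s_0,t_0)$. The goal is the inequality $\vp_t(s_0,t_0)+f(t_0,s_0,\vp(s_0,t_0),\vp'(s_0,t_0),\vp''(s_0,t_0))\le 0$. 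First I would handle a subtlety: $\w$ is only a supremum, so I cannot directly assume the supremum defining $\w(s_0,t_0)$ is attained. By spatial periodicity of $u$, the function $(x,y,t)\mapsto \tfrac12(u(x,t)-u(y,t))$ is defined on a set whose relevant portion is compact modulo the lattice, so for points near $(s_0,t_0)$ the supremum \emph{is} attained (after noting $u$ is continuous and periodic); this is where periodicity enters essentially.

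Next I would introduce the doubled function. On $\R^n\times\R^n\times(0,T)$ consider
\begin{equation*}
    \Phi(x,y,t) := \frac{u(x,t)-u(y,t)}{2} - \vp\left(\frac{|x-y|}{2},t\right).
\end{equation*}
By construction $\sup_{x,y,t}\Phi$ over a neighborhood equals $\sup_{s,t}(\w(s,t)-\vp(s,t))=0$ and is attained at some $(\hat x,\hat y,\hat t)$ with $\tfrac{|\hat x-\hat y|}{2}=s_0>0$ (using that $s_0>0$ rules out $\hat x=\hat y$, where $|x-y|$ fails to be smooth) and $\hat t=t_0$; strictness of the max of $\w-\vp$ forces this. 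Now $u(\cdot,t)$ and $-u(\cdot,t)$ are each in $\USC$, and $-\vp(\tfrac{|x-y|}{2},t)$ is smooth near $(\hat x,\hat y)$ since $\hat x\ne\hat y$, so Theorem \ref{thm max prin semicts} applies with $k=2$, $u_1(x,t)=\tfrac12 u(x,t)$, $u_2(y,t)=-\tfrac12 u(y,t)$, and test function $\vp(\tfrac{|x-y|}{2},t)$. The parabolicity/structural hypothesis ($b_i\le C$ on bounded jets) follows from $u$ being a subsolution and $-u$ being a supersolution of a continuous equation, which bounds the time-derivative components of the jets. The theorem yields $\tau_1,\tau_2\in\R$, $X,Y\in S(n)$ with $(\tau_1,p,X)\in\ol{\mathcal P}^{2,+}(\tfrac12 u)(\hat t,\hat x)$, $(\tau_2,-p,Y)\in\ol{\mathcal P}^{2,+}(-\tfrac12 u)(\hat t,\hat y)$ where $p=\tfrac12\vp'(s_0,t_0)\tfrac{\hat x-\hat y}{|\hat x-\hat y|}$, together with $\tau_1+\tau_2=\vp_t(s_0,t_0)$ and the matrix inequality $\mathrm{diag}(X,Y)\le M+\lambda M^2$ with $M=D^2_{x,y}\vp(\tfrac{|x-y|}{2},t)$ at $(\hat x,\hat y)$. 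Rescaling back (multiply through by $2$), I get jets for $u$ and $-u$ of the right form, and sending $\lambda\to0$ in the matrix inequality yields exactly the constraint $\mathrm{diag}(2X',-2Y')\le D^2_{x,y}(2\vp(\tfrac{|x-y|}{2},t))$ appearing in \eqref{eq 1.3 structure}, after renaming.

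Finally I would invoke the definitions of viscosity sub/supersolution: since $u$ is a viscosity subsolution of \eqref{eq 1.1 fully nonlinear}, the jet at $\hat x$ gives $\tau + F(t_0,\hat x, u(\hat x,t_0), \vp'\tfrac{\hat x-\hat y}{|\hat x-\hat y|}, X)\le 0$; since $u$ is a supersolution, the jet at $\hat y$ gives $-\tau' + F(t_0,\hat y, u(\hat y,t_0), \vp'\tfrac{\hat x-\hat y}{|\hat x-\hat y|}, Y)\ge 0$ (here $\vp'$ evaluated at $s_0$, and $p$ for the $\hat y$ jet works out to be $+\vp'\tfrac{\hat x-\hat y}{|\hat x-\hat y|}$ up to the sign bookkeeping). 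Subtracting and using $\tau+\tau'=\vp_t(s_0,t_0)$ (from the $b_1+b_2$ relation, rescaled):
\begin{equation*}
    \vp_t(s_0,t_0) \le F\left(t_0,\hat y, u(\hat y,t_0), \vp'\tfrac{\hat x-\hat y}{|\hat x-\hat y|}, Y\right) - F\left(t_0,\hat x, u(\hat x,t_0), \vp'\tfrac{\hat x-\hat y}{|\hat x-\hat y|}, X\right).
\end{equation*}
Now I apply \eqref{eq 1.3 structure} with the identifications $v=u(\hat x,t_0)$, $r=u(\hat y,t_0)$ — noting $v-r = 2\w(s_0,t_0)=2\vp(s_0,t_0)>0$ since the sup is attained and we may assume $\w(s_0,t_0)>0$ (if $\w(s_0,t_0)\le0$ the modulus is trivial there) — $\vp=\vp(s_0,t_0)$, $\vp'=\vp'(s_0,t_0)$, $\vp''=\vp''(s_0,t_0)$, and $X,Y$ as produced. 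The structure condition gives the right-hand side $\le -2f(t_0,s_0,\vp,\vp',\vp'')$ — modulo a factor-of-$2$ bookkeeping between $u$ and $\tfrac12 u$ that I would track carefully — hence $\vp_t(s_0,t_0)+f(t_0,s_0,\vp(s_0,t_0),\vp'(s_0,t_0),\vp''(s_0,t_0))\le 0$, which is precisely the subsolution inequality for \eqref{eq 1.4 1D}. The main obstacle I anticipate is the two bookkeeping issues entangled here: first, matching the various factors of $2$ between the definition of $\w$, the doubled function, and the normalization in \eqref{eq 1.3 structure}; and second, verifying that the supremum defining $\w$ near $(s_0,t_0)$ is genuinely attained and that $\hat x\ne\hat y$, so that the test function $\vp(\tfrac{|x-y|}{2},t)$ is smooth where we need it and the matrix $M=D^2_{x,y}\vp(\tfrac{|x-y|}{2},t)$ is well-defined — this smoothness failure at the diagonal is exactly why the restriction to $s>0$ appears in the statement.
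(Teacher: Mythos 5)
Your proposal does not prove the statement in question. The statement is Theorem \ref{thm max prin semicts} itself, i.e.\ the parabolic maximum principle for semicontinuous functions (the paper quotes it from Crandall--Ishii--Lions, Theorem 8.3 of \cite{CIL92}). Your argument instead \emph{invokes} this theorem as a black box ("Theorem \ref{thm max prin semicts} applies with $k=2$, $u_1=\tfrac12 u$, $u_2=-\tfrac12 u$\,\dots") in order to prove Theorem \ref{thm MC main}. Nothing in the proposal addresses how one produces the matrices $X_i$, the jet elements $(b_i,D_{x_i}\vp,X_i)\in\ol{\mathcal P}^{2,+}u_i$, the two-sided matrix inequality involving $M+\lambda M^2$, or the identity $b_1+\cdots+b_k=\vp_t$ for merely upper semicontinuous $u_i$; these are exactly the content of the statement. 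So, as a proof of the stated theorem, the proposal is circular: it assumes what is to be shown. A genuine proof would have to go through the Crandall--Ishii machinery — sup-convolution (in space, and a careful treatment in time), Jensen's lemma on semiconvex functions together with Alexandrov's theorem, the elliptic "theorem on sums", and then the parabolic extension in which the hypothesis bounding $b_i$ on bounded jets is precisely what compensates for the lack of any regularity in $t$ — none of which appears in your write-up. For comparison, the paper itself offers no proof either: it states the result with a citation, which is the standard practice for this deep technical tool.

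As a side remark, the material you did write is essentially a (re)proof of Theorem \ref{thm MC main}, and in that role it matches the paper's Section 3 argument in all essentials: attainment of the supremum via periodicity and continuity, the doubled function $u(x,t)-u(y,t)-2\vp\bigl(\tfrac{|x-y|}{2},t\bigr)$, smoothness of the test function away from the diagonal (guaranteed by $s_0>0$), application of the semicontinuous maximum principle, the sub/supersolution inequalities at $\hat x$ and $\hat y$, passage to the limit $\lambda\to0^+$, and the structure condition \eqref{eq 1.3 structure}. The paper takes $u_1=u$, $u_2=-u$ with test function $2\vp$, which avoids the factor-of-$2$ bookkeeping you flag as a concern, and it does not need the strict-maximum perturbation you introduce. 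But whatever its merits, that argument answers a different question from the one posed.
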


\section{Modulus of Continuity Estimates: Proof of Theorem \ref{thm MC main}}

In this section, we prove Theorem \ref{thm MC main}. The proof relies on the maximum principle for semicontinuous functions. 

\begin{proof}[Proof of Theorem \ref{thm MC main}]
By definitions of viscosity solutions (see Definition \ref{def viscosity}), we need to show that for every smooth function $\vp$ touching $\w$ from above at $(s_0,t_0)\in (0,\infty)\times (0,T)$, in the sense that 
\begin{equation*}
    \begin{cases}
    \vp(s,t) \geq w(s,t) \text{ for all $(s,t)$ near $(s_0,t_0)$}, \\
    \vp(s_0,t_0)  =\w(s_0,t_0), &
    \end{cases}
\end{equation*}
it holds at the point $(s_0,t_0)$ that 
\begin{equation*}
    \vp_t + f(t,s,\vp,\vp',\vp'') \leq 0.
\end{equation*}
By the definition of $\w(s,t)$, we have that 
\begin{equation*}
    u(x,t)-u(y,t) \leq 2\w\left(\frac{|x-y|}{2}, t \right) \leq 2\vp\left(\frac{|x-y|}{2}, t \right) 
\end{equation*}
for all points $x,y \in \R^n$ with $|x-y|$ close to $2s_0$ and $t$ close to $t_0$. 
Since $u$ is spatially periodic, there exist $x_0,y_0 \in \R^n$ with $|x_0-y_0|=2s_0$ such that 
\begin{equation*}
    u(x_0,t_0)-u(y_0,t_0) =2\w\left(\frac{|x_0-y_0|}{2}, t_0 \right) = 2\vp\left(\frac{|x_0-y_0|}{2}, t_0 \right).
\end{equation*}
In other words, the function 
$$Z(x,y,t) :=u(x,t)-u(y,t)- 2\vp\left(\frac{|x-y|}{2}, t \right)$$
attains a local maximum zero at $(x_0,y_0,t_0)$. Now we can apply the parabolic maximum principle for semicontinuous (see Theorem \ref{thm max prin semicts}) functions to conclude that for each $\l>0$, there exist $b_1,b_2 \in \R$ and $X, Y \in S(n)$ such that 
\begin{align*}
    (b_1, \vp' e, X) & \in \mathcal{\overline{P}}^{2,+}u(x_0,t_0), \\
    (-b_2, \vp' e, Y) & \in \mathcal{\overline{P}}^{2,-}u(y_0,t_0),\\
    b_1+b_2 & = 2\vp_t, \\
    \begin{pmatrix}
X & 0 \\
0 & -Y
\end{pmatrix} & \leq M+\l M^2,
\end{align*}
where $e=\frac{x_0-y_0}{|x_0-y_0|}$, $M=D^2_{x,y}\left( 2\vp \left(\frac{|x-y|}{2} \right),t \right)$, and all derivatives of $\vp$ are evaluated at $(s_0,t_0)$ here and in the rest of the proof. 

Since $u$ is a viscosity solution of \eqref{eq 1.1 fully nonlinear}, we have 
\begin{align*}
    b_1 +F(t_0,x_0,u(x_0,t_0), \vp' e, X) \leq 0, \\
    -b_2 +F(t_0,y_0,u(y_0,t_0),\vp' e, Y) \geq 0.
\end{align*}
Therefore, we obtain by letting $\l \to 0^+$ that
\begin{align*}
    2\vp_t=b_1+b_2 & \leq F(t_0,y_0,u(y_0,t_0),\vp' e, Y) -F(t_0,x_0,u(x_0,t_0), \vp' e, X) \\
    & \leq -2f(t_0,s_0,\vp,\vp',\vp''),
\end{align*}
where we have used \eqref{eq 1.3 structure} in the last inequality. The proof is complete. 
\end{proof}

\section{Identifying One-dimensional Operators}\label{Sec 1D}

In this section, we first prove Proposition \ref{prop identify 1D}, and then provide more examples to illustrate how to choose $f$ for $F$ to make the best use of Theorem \ref{thm MC main}. 

The following lemma will be useful. 
\begin{lemma}\label{lemma}
Suppose that $X,Y \in S(n)$ satisfy 
\begin{equation*}
  \begin{pmatrix}
    X & 0 \\ 0 & -Y 
    \end{pmatrix}
    \leq D^2_{x,y} \left(2\vp\left(\frac{|x-y|}{2}, t\right) \right).
\end{equation*}
Then we have $X\leq Y$ and 
\begin{equation*}
    \tr(X-Y) \leq 2\vp''\left(\frac{|x-y|}{2}, t\right) .
\end{equation*}
\end{lemma}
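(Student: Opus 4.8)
The plan is to compute the Hessian $D^2_{x,y}\bigl(2\vp(\tfrac{|x-y|}{2},t)\bigr)$ explicitly and then test the matrix inequality against suitable vectors. Writing $r = |x-y|$, $s = r/2$, and $e = \tfrac{x-y}{r}$, a direct computation gives
\begin{equation*}
    D^2_{x,y}\Bigl(2\vp\bigl(\tfrac{r}{2},t\bigr)\Bigr) =
    \begin{pmatrix} A & -A \\ -A & A \end{pmatrix},
\end{equation*}
where $A = \tfrac12 \vp'' \, e\otimes e + \tfrac{\vp'}{r}\bigl(I - e\otimes e\bigr)$ is an $n\times n$ symmetric matrix (here $\vp'$, $\vp''$ are evaluated at $(s,t)$). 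The block structure $\left(\begin{smallmatrix} A & -A \\ -A & A \end{smallmatrix}\right)$ is the key feature: it is positive semidefinite when $A\geq 0$, it annihilates all vectors of the form $(\xi,\xi)$, and on vectors of the form $(\xi,-\xi)$ it acts as $4A$.

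First I would prove $X \leq Y$. For any $\xi \in \R^n$, apply the hypothesis to the vector $(\xi,\xi) \in \R^{2n}$: the right-hand side gives $\langle A\xi,\xi\rangle - 2\langle A\xi,\xi\rangle + \langle A\xi,\xi\rangle = 0$, so $\langle X\xi,\xi\rangle - \langle Y\xi,\xi\rangle \leq 0$, i.e. $X \leq Y$.

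Next I would prove the trace bound. Let $\{e_i\}_{i=1}^n$ be an orthonormal basis of $\R^n$ with $e_1 = e$. Apply the hypothesis to each vector $(e_i, -e_i)$: the left-hand side contributes $\langle X e_i, e_i\rangle + \langle Y e_i, e_i\rangle$, while the right-hand side gives $4\langle A e_i, e_i\rangle$, which equals $2\vp''$ for $i=1$ and $\tfrac{2\vp'}{r} = \tfrac{\vp'}{s}$ for $i \geq 2$. However, summing these $n$ inequalities would produce $\vp'' + (n-1)\tfrac{\vp'}{s}$ on the right, not $2\vp''$ — so the naive approach via $(e_i,-e_i)$ alone does not suffice, and the main obstacle is that one must extract $\tr(X-Y)$ rather than $\tr(X+Y)$. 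The fix is to combine both families of test vectors: since $X \leq Y$ and the block matrix is $\left(\begin{smallmatrix} A & -A \\ -A & A\end{smallmatrix}\right)$, one tests with vectors $(\xi,\eta)$ chosen so that the right-hand side reads off $A$ on the \emph{difference}. Concretely, applying the hypothesis to $(e_i, 0)$ and $(0, e_i)$ gives $\langle X e_i,e_i\rangle \leq \langle A e_i,e_i\rangle$ and $-\langle Y e_i,e_i\rangle \leq \langle A e_i,e_i\rangle$, which is too lossy; instead I would use that for the block form, the inequality $\left(\begin{smallmatrix} X & 0 \\ 0 & -Y\end{smallmatrix}\right) \leq \left(\begin{smallmatrix} A & -A \\ -A & A\end{smallmatrix}\right)$ is equivalent, via the congruence $\left(\begin{smallmatrix} I & 0 \\ I & I\end{smallmatrix}\right)$, to a statement about $X - Y$ directly. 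Performing that change of variables: conjugating by $\left(\begin{smallmatrix} I & I \\ 0 & I\end{smallmatrix}\right)$ transforms the right-hand side into $\left(\begin{smallmatrix} 0 & 0 \\ 0 & A\end{smallmatrix}\right)$-type blocks appropriately, and one reads off that $X - Y \leq 2A$ in the sense that, paired against any $\xi$, $\langle (X-Y)\xi, \xi \rangle \leq 2\langle A\xi,\xi\rangle$. Wait — more carefully: test $\left(\begin{smallmatrix} X & 0 \\ 0 & -Y\end{smallmatrix}\right) \leq \left(\begin{smallmatrix} A & -A \\ -A & A\end{smallmatrix}\right)$ against $(\xi, \xi)$ already gave $X\leq Y$; testing against $(\xi,-\xi)$ gives $\langle(X+Y)\xi,\xi\rangle \leq 4\langle A\xi,\xi\rangle$. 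Using $Y \geq X$, hence $\langle Y\xi,\xi\rangle \geq \langle X\xi,\xi\rangle$, we get $2\langle X\xi,\xi\rangle \leq \langle(X+Y)\xi,\xi\rangle \leq 4\langle A\xi,\xi\rangle$, so $X \leq 2A$; but that still does not bound $\tr(X-Y)$ from above by $2\vp''$. The genuine argument must exploit that $\tr A = \vp'' + (n-1)\tfrac{\vp'}{s}$ while the off-diagonal cancellation forces the transverse directions of $X-Y$ to vanish; so the real content is: apply the hypothesis to $(\xi,-\xi)$ with $\xi \perp e$ to get $\langle (X+Y)\xi,\xi\rangle \leq \tfrac{2\vp'}{s}\langle\xi,\xi\rangle$ (hmm, but we want $X-Y$).

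Let me restructure: the clean route is to note $\tr(X-Y) = \sum_i \langle (X-Y)e_i, e_i\rangle$ and to bound each term. For the direction $e_1 = e$: from $(e,-e)$, $\langle X e,e\rangle + \langle Y e, e\rangle \leq 2\vp''$; this controls $\langle Xe,e\rangle - \langle Ye,e\rangle \leq \langle Xe,e\rangle + \langle Ye,e\rangle \leq 2\vp''$ only if $\langle Ye,e\rangle \geq 0$, which need not hold. I expect the actual proof in the paper writes $Z = \left(\begin{smallmatrix} X & 0 \\ 0 & -Y\end{smallmatrix}\right)$, observes $Z \leq \left(\begin{smallmatrix} A & -A \\ -A & A\end{smallmatrix}\right)$, and then for each $i$ pairs $Z$ against the specific vector $v_i = (e_i, e_i)$ for $i \geq 2$ (giving $0$ on the right) and $v_1 = (e,-e)/\sqrt 2$ — actually the decisive trick: for $i\ge 2$, $(e_i,e_i)$ gives $\langle X e_i,e_i\rangle - \langle Y e_i,e_i\rangle \leq 0$; for $i=1$, $(e,-e)$ gives $\langle X e, e\rangle + \langle Y e,e\rangle \leq 2\vp''$; adding the $n-1$ transverse inequalities to \emph{half} of... no. I will present it cleanly in the writeup; the mechanism is that the first inequality $X\le Y$ handles transverse directions giving $\langle(X-Y)e_i,e_i\rangle\le 0$ for $i\ge 2$, hence $\tr(X-Y) \le \langle(X-Y)e,e\rangle$, and then a separate test bounds $\langle(X-Y)e,e\rangle \le 2\vp''$ using both $(e,e)$ and $(e,-e)$: indeed $(e,-e)$ yields $\langle Xe,e\rangle+\langle Ye,e\rangle \le 2\vp''$ and combined with $\langle Xe,e\rangle \le \langle Ye,e\rangle$ (from $X\le Y$) gives $2\langle Xe,e\rangle \le 2\vp''$, i.e. $\langle Xe,e\rangle \le \vp''$; symmetrically $-\langle Ye,e\rangle \le \vp''$... hmm that gives $\langle(X-Y)e,e\rangle \le 2\vp''$ only after adding, which works: $\langle Xe,e\rangle - \langle Ye,e\rangle \le \vp'' + \vp'' = 2\vp''$. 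Combining, $\tr(X-Y) = \langle(X-Y)e,e\rangle + \sum_{i\ge2}\langle(X-Y)e_i,e_i\rangle \le 2\vp'' + 0 = 2\vp''$. The main obstacle, then, is precisely the bookkeeping that separates the radial direction (where the bound $2\vp''$ comes in) from the transverse directions (handled for free by $X\le Y$), and correctly deriving $\langle Xe,e\rangle \le \vp''$ and $\langle Ye,e\rangle \ge -\vp''$ from the two test vectors $(e,e)$ and $(e,-e)$.
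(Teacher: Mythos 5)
There is a concrete error in the half of your argument that produces the trace bound: for the block matrix $\begin{pmatrix} X & 0\\ 0 & -Y\end{pmatrix}$ the quadratic form at a vector $(\xi,\eta)$ is $\langle X\xi,\xi\rangle-\langle Y\eta,\eta\rangle$, which is \emph{quadratic} in $\eta$, so testing with $(\xi,-\xi)$ gives $\langle X\xi,\xi\rangle-\langle Y\xi,\xi\rangle$, not $\langle X\xi,\xi\rangle+\langle Y\xi,\xi\rangle$ as you write. This sign slip is what convinces you that the direct test fails and forces the patch at the end, and that patch is not valid: from your two stated premises $\langle Xe,e\rangle+\langle Ye,e\rangle\le 2\vp''$ and $\langle Xe,e\rangle\le\langle Ye,e\rangle$ one can get $\langle Xe,e\rangle\le\vp''$, but the ``symmetric'' claim $-\langle Ye,e\rangle\le\vp''$ does not follow (take $\vp''=-1$ and $\langle Xe,e\rangle=\langle Ye,e\rangle=-1$: both premises hold and the claim fails), so $\langle (X-Y)e,e\rangle\le 2\vp''$ is not established. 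Since in the intended applications $\vp$ is concave and $\vp''\le 0$, this is exactly the relevant regime, hence a genuine gap as written.

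Once the sign is corrected, your strategy closes immediately and is essentially the paper's proof. Testing the hypothesis with $(e,-e)$ gives directly $\langle Xe,e\rangle-\langle Ye,e\rangle\le 4\langle Ae,e\rangle=2\vp''$ (no auxiliary inequalities needed), and testing with $(e_i,e_i)$ for $i\ge 2$ gives $\langle (X-Y)e_i,e_i\rangle\le 0$; summing over the orthonormal basis yields $\tr(X-Y)\le 2\vp''$. The paper packages the same computation as a trace inequality: it multiplies the matrix inequality by the positive semidefinite matrix $\begin{pmatrix} I & C\\ C & I\end{pmatrix}$ with $C=I-2\,e\otimes e$ and takes traces, and this matrix is exactly $\sum_{i\ge 2}(e_i,e_i)(e_i,e_i)^{T}+(e,-e)(e,-e)^{T}$, i.e.\ the sum of the rank-one tests above. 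Your first part, $X\le Y$ via the vectors $(\xi,\xi)$, is correct and coincides with the paper's argument.
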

\begin{proof}[Proof of Lemma \ref{lemma}]
Note that the Hessian of $2\vp\left(\frac{|x-y|}{2},t \right)$ has the form $$\begin{pmatrix} P & -P \\
-P & P\end{pmatrix},$$
where $P=2D^2_x\vp\left(\frac{|x-y|}{2},t \right)$. 
Then $X \leq Y$ follows from the fact that the matrix $\begin{pmatrix} P & -P \\
-P & P\end{pmatrix}$ annihilates vectors of the form $\begin{pmatrix} x  \\
x\end{pmatrix}$. 
For any matrix $C$ such that the matrix $\begin{pmatrix} I & C \\
-C & I\end{pmatrix}$ is positive semidefinite, we have 
\begin{eqnarray*}
    \tr(X-Y) &=& \tr \left(\begin{pmatrix} I & C \\
C & I\end{pmatrix} \begin{pmatrix}
    X & 0 \\ 0 & -Y 
    \end{pmatrix} \right) \\
&\leq& \tr \left(\begin{pmatrix} I & C \\
C & I\end{pmatrix} \begin{pmatrix}
    P & -P \\ -P & P 
    \end{pmatrix} \right)  \\
&=& 2 \tr\left( (I-C)P \right).
\end{eqnarray*}
Choosing $C=I-2e \otimes e$ with $e=\frac{x-y}{|x-y|}$ produces 
$$\tr\left( (I-C)P \right) =2 P(e,e)=\vp''\left(\frac{|x-y|}{2},t\right).$$ Thus we have the desired estimate.  %$\tr(X-Y) \leq 2\vp''$.
\end{proof}

\begin{proof}[Proof of Proposition \ref{prop identify 1D} ]
(1). For simplicity, we write $e=\frac{x-y}{|x-y|}$. 
For any $x,y \in \R^n $ with $|x-y|=2s$ and $r,v \in \R$ with $v-r=2\vp$,  we have 
\begin{eqnarray*}
   && F(y,r,\vp' e ,Y)-F(x,v,\vp' e, X) \\
   &=& -\tr(Y) -\langle W(y), \vp' e \rangle -Vr-h(y) +\tr(X) +\langle W(x), \vp' e \rangle +V v +h(x) \\
   &= & \tr(X-Y) +\langle W(x)-W(y), \vp' e\rangle +V(v-r) +h(x) -h(y) \\
   &\leq & 2\vp'' +2\|W\|_{L^\infty} |\vp'| +2V\vp+2w_h(s),
\end{eqnarray*}
where we have used Lemma \ref{lemma} and the assumption that $\w_h$ is a modulus of continuity of $h$. Thus $f(\vp,\vp',\vp'')=-\vp'' -\|W\|_{L^\infty} |\vp'| -V\vp -w_h$ satisfies \eqref{eq 1.3 structure}. 

(2). Since the quasi-linear isotropic operator $F$ 
%$$F(Du,D^2u)=-\left[\a(|Du|)\frac{D_i u D_ju}{|Du|^2} +\b(|Du|)\left(\delta_{ij}-\frac{D_i u D_j u}{|Du|^2} \right) \right]D_iD_j u$$
is invariant under rotations, we may choose an orthonormal basis $\{e_i\}_{i=1}^n$ with $e_1=\frac{x-y}{|x-y|}$ to simplify the calculations. 
With respect this basis, we have 
%Note that the Hessian of $2\vp\left(\frac{|x-y|}{2},t \right)$ has the form 
$$D^2_{x,y} \left(2\vp\left(\frac{|x-y|}{2}, t\right) \right) =\begin{pmatrix} P & -P \\
-P & P\end{pmatrix},$$
%where $P=D^2_x\vp\left(\frac{|x-y|}{2},t \right)$. 
where 
\begin{equation*}
    P=\begin{pmatrix} 
    \frac 1 2 \vp''& 0 & \cdots & 0\\
0 & \frac{\vp'}{|x-y|} & \cdots & 0 \\
0 & 0 & \cdots & \frac{\vp'}{|x-y|}
    \end{pmatrix}.
\end{equation*}
Let
$$A=\begin{pmatrix}
\a(|\vp'|) & 0 & \cdots & 0\\
0 & \b(|\vp'|) & \cdots & 0 \\
0 & 0 & \cdots & \b(|\vp'|)
\end{pmatrix}$$
and 
$$C=\begin{pmatrix}
-\a(|\vp'|) & 0 & \cdots & 0\\
0 & \b(|\vp'|) & \cdots & 0 \\
0 & 0 & \cdots & \b(|\vp'|)
\end{pmatrix}.$$
It's easy to see that 
$\begin{pmatrix}
A & C \\
C & A
\end{pmatrix}$ is a positive semidefinite matrix.
Therefore, we have 
\begin{eqnarray*}
&& F(\vp e_1, Y) -F(\vp e_1, X) \\
&=& -\tr (AY)+\tr(AX) \\
&=& \tr\left[ \begin{pmatrix} A & C \\
C & A\end{pmatrix} \begin{pmatrix} X & 0 \\
0 & -Y\end{pmatrix}\right]  \\
&\leq&  \tr\left[ \begin{pmatrix} A & C \\
C & A\end{pmatrix} \begin{pmatrix} P & -P \\
-P & P\end{pmatrix} \right] \\
&=& 2\tr\left[ (A-C)P \right] \\
&=&  2 \a(|\vp'|)\vp''
\end{eqnarray*}
Thus, $f=-\a(|\vp'|)\vp''$ satisfies \eqref{eq 1.3 structure}. 

(3). This is a special case of Part (5). By Lemma \ref{lemma}, we have 
\begin{equation*}
    F(Y)-F(X) \leq -\l \tr(Y-X) \leq 2 \l \vp''.
\end{equation*}
So one can take $f(\vp'')=-\l \vp''$. 

For simplicity, we write $e=\frac{x-y}{|x-y|}$ in (4) and (5). 

(4) By Lemma \ref{lemma}, we have $X\leq Y$. In view of $v-r=2\vp>0$, properness of $F$ implies that 
\begin{equation*}
    F(t,r, \vp' e, Y) -F(t,v,\vp' e, X) \leq 0.
\end{equation*}
It follows that $f \equiv 0$ satisfies \eqref{eq 1.3 structure}. 

(5). Using the assumptions on $F$, we estimate that 
\begin{eqnarray*}
&& F(t,y,r, \vp' e, Y) -F(t,x, v,\vp' e, X) \\
&\leq& L|x-y| + F(t,x,r, \vp' e, Y) -F(t,x,v,\vp' e, X) \\
&\leq& L|x-y| +K(v-r) + F(t,x,v, \vp' e, Y) -F(t,x,v,\vp' e, X)\\
& \leq& L|x-y| +K(v-r) + \lambda(|\vp'|,t) \tr (X-Y) \\
&=& 2Ls + 2K\vp+ 2\lambda(|\vp'|,t) \vp'',
\end{eqnarray*}
where we used $|x-y|=2s$, $v-r=2\vp$ and Lemma \ref{lemma} in the last inequality. 
Thus $f(s,\vp,\vp',\vp'')=-\l(|\vp'|,t)\vp''-K \vp -Ls$ satisfies \eqref{eq 1.3 structure}. 
%For simplicity, we write $e=\frac{x-y}{|x-y|}$
%and $C=I -2e \otimes e$. Here we use $p \otimes q$ to denote the matrix whose $(i,j)$ entry is $p_i q_j$. It's easy to verify that the matrix \begin{equation*}
%    \begin{pmatrix} 
%    I & C \\
%    C & I 
%    \end{pmatrix}
%\end{equation*}
%is positive semi-definite. 
%Therefore, we have 
%\begin{eqnarray*}
%&& F(t,y,r, \vp' e, Y) -F(t,x,v,\vp' e, X) \\
%&\leq&  F(t,y, v, \vp' e, Y) -F(t,x,u(x,t),\vp' e, X) \\
%&\leq& \lambda(|\vp'|) \tr (X-Y) \\
%&=&\lambda(|\vp'|)  \tr\left[ \begin{pmatrix} I & C \\
%C & I \end{pmatrix} \begin{pmatrix} X & 0 \\
%0 & -Y\end{pmatrix} \right] \\
%&\leq& \lambda(|\vp'|)  \tr\left[ \begin{pmatrix} I & C \\
%C & I \end{pmatrix} \begin{pmatrix} P & -P \\
%-P & P\end{pmatrix} \right] \\
%&=& 2 \lambda(|\vp'|) \tr\left[ (I-C)P \right]\\
%&=& 2\lambda(|\vp'|) \vp''.
%\end{eqnarray*}

%(3) The fact that the matrix $\begin{pmatrix} P & -P \\
%-P & P\end{pmatrix}$ annihilates vectors of the form $\begin{pmatrix} x  \\
%x\end{pmatrix}$ together with $\eqref{eq 1.3 structure}$ implies $X\leq Y$. 
%It follows from the properness of $F$ that
%\begin{equation*}
%    F(t,y,r, \vp' e, Y) -F(t,x,v,\vp' e, X) \leq F(t,y, v, \vp' e, Y) -F(t,x,u(x,t),\vp' e, X) \leq 0
%\end{equation*}
\end{proof}

%(3). This is a special case of (1) with $\a=0$ and $\b=1$. 

Next, we provide a few more examples. 
\begin{prop}\label{prop 1D identify}
The following operators $F$ and $f$ satisfy the structure condition \eqref{eq 1.3 structure}. 
\begin{enumerate}
\item 
    \begin{equation*}\label{}
        F(p,X)=-\tr \left[ \left(I- \frac{p \otimes p}{1+|p|^2}\right) X \right],
\end{equation*}
    \begin{equation*}
        f=-\frac{\vp''}{1+(\vp')^2}.
    \end{equation*}
    %\item 
    %\begin{equation*}\label{}
    %    F(p,X)=-\tr \left[ \left(\a(|p|)\frac{p \otimes p}{|p|^2} +\b(|p|) \left(I- \frac{p \otimes p}{|p|^2}\right) \right) X \right],
%\end{equation*}
    %\begin{equation*}
    %    f=-\a(|\vp'|)\vp''.
    %\end{equation*}
    % \item There exists a nonnegative function $\l(|p|)$ such that 
    % \begin{equation*}
    %    F(t,x,r,p,X+Z) -F(t,x,r,p,X) \leq -\lambda(|p|) \tr(Z) \text{ for } Z\geq 0. 
    %\end{equation*}
    % \begin{equation*}
    %     f=-\l(|\vp'|)\vp''.
    % \end{equation*}
    \item
    \begin{equation*}\label{}
        F(p,X)=-\tr( A(p,t) X ),
\end{equation*}
where $A(p,t)=a_{ij}(p,t)$ and there exists a continuous function $\a(R,t)$ such that 
\begin{equation*}
0< \a(R,t) \leq R^2 \inf_{|p|=R, (v \cdot p) \neq 0} \frac{v^T A(p,t)v}{(v \cdot p)^2}
\end{equation*}
    \begin{equation*}
        f=-\a(\vp',t)\vp''.
    \end{equation*}
     %\item $F(t,x,r,p,X)$ is uniformly elliptic with ellipticity constants $0<\l \leq \Lambda < \infty$, namely for all $t,x,r,p$, it holds that
    %\begin{equation*}
    %    \l \tr(Z) \leq F(t,x,r,p,X-Z) -F(t,x,r,p,X) \leq \Lambda \tr(Z) \text{ for } Z\geq 0. 
    %\end{equation*}
    %\begin{equation*}
    %    f=-\l \vp''.
    %\end{equation*}
    %\item 
    %\begin{equation*}
    %    F(p,X)=\tr \left[\left(I-\frac{p \otimes p}{|p|^2} \right) X \right],
    %\end{equation*}
    %\begin{equation*}
    %    f\equiv 0. 
    %\end{equation*}
\end{enumerate}
\end{prop}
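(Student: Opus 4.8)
The plan is to verify the structure condition \eqref{eq 1.3 structure} for each of the two pairs in Proposition \ref{prop 1D identify} by the same rotation-and-conjugation trick used in the proof of Proposition \ref{prop identify 1D}(2). Both operators are of the form $F(p,X) = -\tr(A(p) X)$ with $A$ independent of $x$, $r$, and $t$ (in case (1)) or depending only on $(p,t)$ (in case (2)), so the structure condition reduces to bounding $F(p,Y) - F(p,X) = \tr(A(p)(X-Y))$ from above by $-2f$, given only the matrix inequality $\begin{pmatrix} X & 0 \\ 0 & -Y \end{pmatrix} \leq \begin{pmatrix} P & -P \\ -P & P \end{pmatrix}$ with $P = 2 D^2_x \vp(|x-y|/2,t)$. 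Here $p = \vp' e$ with $e = (x-y)/|x-y|$, so $|p| = |\vp'|$ and $p$ points along $e$.

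First I would choose, as in the proof of part (2), an orthonormal basis $\{e_i\}$ with $e_1 = e$. Relative to this basis, $P$ is diagonal with entries $\tfrac12 \vp''$ in the $e_1$ slot and $\vp'/|x-y| = \vp'/(2s)$ in the remaining slots, and the Hessian of $2\vp(|x-y|/2,t)$ is the block matrix $\begin{pmatrix} P & -P \\ -P & P\end{pmatrix}$. For case (1), the coefficient matrix $A(p) = I - \tfrac{p \otimes p}{1+|p|^2}$ is also diagonal in this basis, with entry $1 - \tfrac{(\vp')^2}{1+(\vp')^2} = \tfrac{1}{1+(\vp')^2}$ in the $e_1$ slot and $1$ in the others. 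I would then produce a matrix $C$, diagonal in this basis, such that $\begin{pmatrix} A & C \\ C & A\end{pmatrix} \geq 0$ and $(A-C)$ is supported on the $e_1$ direction; concretely, negate the $e_1$-entry of $A$ to form $C$ (exactly as in part (2)), so that $\begin{pmatrix} A & C \\ C & A\end{pmatrix}$ is positive semidefinite because in each coordinate block it is either $\begin{pmatrix} a & -a \\ -a & a \end{pmatrix} \geq 0$ (the $e_1$ slot) or $\begin{pmatrix} 1 & 1 \\ 1 & 1 \end{pmatrix} \geq 0$ (the others). Then
\begin{align*}
F(p,Y) - F(p,X) &= \tr(A(X-Y)) = \tr\!\left[ \begin{pmatrix} A & C \\ C & A\end{pmatrix} \begin{pmatrix} X & 0 \\ 0 & -Y\end{pmatrix}\right] \\
&\leq \tr\!\left[ \begin{pmatrix} A & C \\ C & A\end{pmatrix} \begin{pmatrix} P & -P \\ -P & P\end{pmatrix}\right] = 2\tr\!\left[(A-C)P\right] = 2 \cdot \frac{1}{1+(\vp')^2} \cdot \frac12 \vp'',
\end{align*}
wait, I should double-check the bookkeeping: $2\tr[(A-C)P]$ with $(A-C)$ having only the $e_1$-entry equal to $2/(1+(\vp')^2)$ and $P$ having $e_1$-entry $\tfrac12\vp''$ gives $2 \cdot \tfrac{2}{1+(\vp')^2}\cdot \tfrac12 \vp'' = \tfrac{2\vp''}{1+(\vp')^2}$, so indeed $f = -\vp''/(1+(\vp')^2)$ works. (This is consistent with the quasilinear formula in Proposition \ref{prop identify 1D}(2), applied with $\a(s) = 1/(1+s^2)$, $\b = 1$; the point here is just to exhibit a convenient explicit choice.)

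For case (2), the coefficient matrix $A(p,t)$ need not be diagonal in any basis aligned with $e$, so the clean block trick of part (2) does not apply directly — this is the step I expect to be the main obstacle. The hypothesis is the lower bound $\a(|p|,t) \leq |p|^2 \inf_{v \cdot p \neq 0} v^T A(p,t) v/(v\cdot p)^2$, i.e. $v^T A(p,t) v \geq \a(|p|,t)(v\cdot p)^2/|p|^2 = \a(|p|,t)(v\cdot e)^2$ for all $v$ with $v\cdot p \neq 0$, hence (by continuity) for all $v$. Equivalently, $A(p,t) \geq \a(|p|,t)\, e\otimes e$ as symmetric matrices, where $e = p/|p|$. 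I would use this to handle the estimate as follows: write $A = \a(|p|,t)\, e\otimes e + B$ with $B = A - \a(|p|,t)\, e\otimes e \geq 0$. Then $\tr(A(X-Y)) = \a(|p|,t)\, (X-Y)(e,e) + \tr(B(X-Y))$. By Lemma \ref{lemma}, $X \leq Y$, so since $B \geq 0$ we get $\tr(B(X-Y)) \leq 0$; and by the proof of Lemma \ref{lemma} (taking $C = I - 2 e\otimes e$ there) we have the sharper fact $(X-Y)(e,e) \leq 2 P(e,e) = \vp''$ — indeed that is exactly what the choice $C = I-2e\otimes e$ produces, giving $\tr((I-C)P) = 2P(e,e)$ concentrated in the $e$ direction. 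Actually the cleanest route is: $A(p,t) = \a(|p|,t)(I - C)/2 + B'$ where $C = I - 2e\otimes e$ and $B' = A - \a(|p|,t)\, e\otimes e \geq 0$; combining the Lemma \ref{lemma} conjugation bound $\tr[\a(I-C)P] \le$ (the same with $\begin{pmatrix}X&0\\0&-Y\end{pmatrix}$) with $\tr(B'(X-Y))\le 0$ yields $F(p,Y)-F(p,X) = \tr(A(X-Y)) \leq \a(|\vp'|,t)\,\vp''\cdot 2 = 2\a(\vp',t)\vp''$, so $f = -\a(\vp',t)\vp''$ satisfies \eqref{eq 1.3 structure}. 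I would then remark that (1) is the special case $A(p) = I - p\otimes p/(1+|p|^2)$, for which $v^T A v = |v|^2 - (v\cdot p)^2/(1+|p|^2) \geq (v\cdot e)^2(1 - |p|^2/(1+|p|^2)) = (v\cdot e)^2/(1+|p|^2)$, giving the admissible choice $\a(R,t) = 1/(1+R^2)$ and recovering the formula in (1).
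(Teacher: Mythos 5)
Part (1) of your proposal is correct and is in substance the paper's own treatment: the paper simply notes it is the quasilinear isotropic case of Proposition \ref{prop identify 1D} with $\a(s)=\frac{1}{1+s^2}$, $\b=1$, and your explicit choice of $C$ reproduces exactly that computation.

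In part (2) your overall strategy is the right one, and your suspicion about the reduction is well founded: the hypothesis yields only $A(p,t)\geq \a(|p|,t)\,e\otimes e$ with $e=p/|p|$, not $A\geq\a I$ (the paper's own proof asserts the latter and invokes Part (5) of Proposition \ref{prop identify 1D}, which is stronger than what the stated condition gives), so a direct argument via the decomposition $A=\a\,e\otimes e+B'$, $B'\geq 0$, killing $\operatorname{tr}(B'(X-Y))$ by $X\leq Y$, is indeed the faithful route. However, your final estimate has a genuine factor-of-two gap. You quote the ``sharper fact'' $(X-Y)(e,e)\leq 2P(e,e)=\varphi''$ and then conclude $\operatorname{tr}(A(X-Y))\leq 2\a\varphi''$; from what you state you only get $\operatorname{tr}(A(X-Y))\leq \a\varphi''$, and since $\varphi''$ may be negative at the touching point, $\a\varphi''\leq 2\a\varphi''$ fails in general, so the structure condition \eqref{eq 1.3 structure} with $f=-\a\varphi''$ is not actually verified by the chain as written. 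Moreover, that ``sharper fact'' is not what the proof of Lemma \ref{lemma} with $C=I-2e\otimes e$ produces: that argument gives the full-trace bound $\operatorname{tr}(X-Y)\leq 2\varphi''$, not a bound on the $(e,e)$ component (the component bound $(X-Y)(e,e)\leq\varphi''$ does hold, by testing the block inequality with $(e,0)$ and $(0,e)$, but it carries the wrong constant for your purpose). The repair is one line: test the block matrix inequality in \eqref{eq 1.3 structure} with the single vector $(e,-e)\in\R^{2n}$ to get $(X-Y)(e,e)\leq 4P(e,e)=2\varphi''$, whence $\operatorname{tr}(A(X-Y))\leq \a\,(X-Y)(e,e)+\operatorname{tr}(B'(X-Y))\leq 2\a\varphi''$ as required; equivalently, run the conjugation of the proof of Proposition \ref{prop identify 1D}(2) with $C=A-2\a\,e\otimes e$, noting $A-C=2\a\,e\otimes e\geq 0$ and $A+C=2(A-\a\,e\otimes e)\geq 0$ precisely because of the hypothesis, which yields $\operatorname{tr}(A(X-Y))\leq 2\operatorname{tr}((A-C)P)=4\a P(e,e)=2\a\varphi''$.
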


Theorem \ref{thm MC main} covers \cite[Theorem 2.1]{AC09} with the operators in part (1) and  \cite[Theorem 3.1]{AC09} with the operators in part (2). 

\begin{proof}[Proof of Proposition \ref{prop 1D identify}]
(1). This is a special case of part (1) in Proposition \ref{prop identify 1D} with $\a=\frac{1}{1+|p|^2}$ and $\b=1$. 

(2). The assumption implies $A(p,t) \geq \a(|p|,t) I$. So for $Z\geq 0$, we have 
\begin{equation*}
    F(p,X+Z)-F(p,X)=-\tr(A(p,t)Z) \leq -\a(|p|,t)\tr(Z). 
\end{equation*}
This becomes a special case of Part (5) of Proposition \ref{prop identify 1D}. 
\end{proof}
\section{Estimates with Neumann Boundary Conditions}\label{Sec Neumann}

The goal of this section is to show that the same modulus of continuity estimates for spatially periodic solutions in Theorem \ref{thm MC main} holds if $u$ solves \eqref{eq 1.1 fully nonlinear} 
on a bounded convex domain $\Omega \subset \R^n$ with smooth boundary and satisfies the Neumann boundary condition. 
%For regular solutions of quasilinear equations, modulus of continuity estimate with Neumann boundary condition is treated in \cite{AC09}. 
The difference from the periodic case is that the local maximum point may lie on $\p (\Omega \times \Omega)$. For regular solutions of quasilinear equations, this possibility can be ruled out easily by assuming convexity of $\Omega$ as in \cite{AC09}. However, the same argument does not work for viscosity solutions because we cannot differentiate the equation. Moreover, the Neumann boundary condition needs to be understood in a weak sense as viscosity solutions are merely continuous.  
%The proof presented below is an extension of \cite[Section 6]{LW17} from quasilinear equations to fully nonlinear equations.

Let $\Omega \subset \R^n$ be a bounded domain with smooth boundary. We consider equations of the form
\begin{equation}\label{eq Boundary}
\begin{cases}
u_t+F(t,x, u, D u, D^2 u)=0, & \text{ in } \Omega \times (0,T), \\
B(t,x,u,Du,D^2 u) =0, & \text{ on } \p \Omega \times (0,T).
\end{cases}
\end{equation} 
Here both $F$ and $B$ are degenerate elliptic and continuous.
%We first explain the meaning of the Neumann boundary condition $\langle Du, \nu \rangle=0$ or the meaning of a viscosity solution of \eqref{eq Boundary} as $u$ is merely continuous. As in \cite[Section 7]{CIL92}, we define 
We recall the definition of viscosity solutions to the boundary value problem \eqref{eq Boundary} from \cite[Section 7]{CIL92}.
%Let $\Omega$ be an open subset of $\R^n$ and $T>0$. For brevity we write $z=(x,t)$. %Let $O$ be an open subset of $\Omega \times (0, T)$.
%Consider the boundary problem of the form
%\begin{eqnarray}\label{BVP}
%\mbox{     }u_t+F(z,u,Du,D^2u)=0 \mbox{  in  } \Omega \times (0,T), \mbox{       }
%B(z,u,Du,D^2u) =0 \mbox{  on   } \p \Omega \times (0,T).
%\end{eqnarray}
%Assume $F\in C(\ol{\Omega} \times \R \times \R \times \R^n \times S^{n\times n})$ and $B \in C(\p \Omega \times \R \times \R \times \R^n \times S^{n\times n})$
%are both proper.

\begin{definition}
\begin{enumerate}
    \item  A function $u\in \USC(\ol{\Omega}\times (0,T))$ is a viscosity subsolution of  \eqref{eq Boundary} if
\begin{equation*}
     \tau+F(t,x,u(x,t),p,X) \leq 0
\end{equation*}
for  all $(x,t) \in \Omega \times(0,T),
(\tau, p ,X ) \in  \overline{\mathcal{P}}^{2,+}_{\ol{\Omega}\times (0,T)} u(x,t)$, and 
\begin{equation*}
    \min \left\{\tau+F(t,x,u(x,t),p,X), B(t,x, u(x,t),p,X) \right\} \leq 0
\end{equation*}
for all   $ (x,t) \in \p \Omega \times(0,T), (\tau, p ,X ) \in  \overline{\mathcal{P}}^{2,+}_{\ol{\Omega}\times (0,T)} u(x,t)$. 

\item A function $u\in \LSC(\ol{\Omega}\times (0,T))$ is a viscosity supersolution of  \eqref{eq Boundary} if
\begin{equation*}
    \tau+F(t,x,u(x,t),p,X) \geq 0
\end{equation*}
for all  $ (x,t) \in \Omega \times(0,T),
(\tau, p ,X ) \in  \overline{\mathcal{P}}^{2,-}_{\ol{\Omega}\times (0,T)} u(x,t)$,
and 
\begin{equation*}
    \max \left\{\tau+F(t,x,u(x,t),p,X), B(t,x,u(x,t),p,X) \right\} \geq 0
\end{equation*}
 for all  $  (x,t) \in \p \Omega \times(0,T), (\tau, p ,X ) \in  \overline{\mathcal{P}}^{2,-}_{\ol{\Omega}\times (0,T)} u(x,t).$

\item A viscosity solution of \eqref{eq Neumann} is a continuous function $u$ which is both a viscosity subsolution and a viscosity supersolution of \eqref{eq Neumann}.
\end{enumerate}
\end{definition}

%We consider the following quasilinear evolution equations:
%\begin{align*}
%\frac{\p u}{\p t}=a^{ij}(Du,t) D_iD_j u + b(Du, t)\mbox{  in   } \Omega \times (0,T),  \\
%\langle Du(x,t), n(x) \rangle=0 \mbox{  on  } \p \Omega\times (0,T),
%\end{align*}
%where $A(p,t)=\left(a^{ij}(p, t)\right)$ is positive semi-definite and $n(x)$ is the exterior unit normal vector at $x$.
%As in \cite{AC2}, we assume that there exists a continuous function
%$\alpha: \R_{+}\times [0, T] \to \R$ with
%\begin{equation}
%0 < \alpha(R, t) \leq R^2 \inf_{|p|=R, (v \cdot p) \neq 0} \frac{v^{T}A(p, t)v}{(v \cdot p)^2},
%\end{equation}

%The following theorem is a generalization of Theorem 4.1 in \cite{AC2} to viscosity solutions.
The main result of this section is 
\begin{thm}\label{thm MC Neumann}
Let $\Omega \subset \R^n$ be a bounded convex domain with smooth boundary and diameter $D$.
Suppose $u$ is a viscosity solution of 
\begin{equation}\label{eq Neumann}
\begin{cases}
u_t+F(t,x, u, D u, D^2 u)=0, & \text{ in } \Omega \times (0,T), \\
\langle Du, \nu \rangle =0, & \text{ on } \p \Omega \times (0,T),
\end{cases}
\end{equation} 
where $\nu$ denotes the unit outward normal vector field along $\p \Omega$. 
Let $f(t,s,\vp,\vp',\vp'')$ be a one-dimensional operator satisfying \eqref{eq 1.3 structure}. 
Then the modulus of continuity $\omega$ of $u$ is a viscosity subsolution 
\begin{equation*}\label{}
    \w_t + f(t, s,\w, \w', \w'')=0
\end{equation*}
on $(0,D/2) \times (0,T)$ whenever $w$ is increasing.
\end{thm}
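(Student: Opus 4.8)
\emph{Strategy.} The plan is to rerun the doubling-of-variables argument from the proof of Theorem \ref{thm MC main}, inserting two new ingredients to handle the case where the doubled maximum sits on $\p(\Omega\times\Omega)$. The first is that, since $s\mapsto\w(s,t)$ is increasing, any smooth $\vp$ touching $\w$ from above at an interior point $(s_0,t_0)\in(0,D/2)\times(0,T)$ satisfies $\vp_s(s_0,t_0)\ge0$: for $s>s_0$ near $s_0$ one has $\vp(s,t_0)\ge\w(s,t_0)\ge\w(s_0,t_0)=\vp(s_0,t_0)$. The second is convexity of $\Omega$: if $x\in\p\Omega$ and $y\in\ol\Omega$, the supporting hyperplane at $x$ gives $\langle x-y,\nu(x)\rangle\ge0$, and symmetrically $\langle x-y,\nu(y)\rangle\le0$ when $y\in\p\Omega$, $x\in\ol\Omega$. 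Finally, fix $g\in C^\infty(\ol\Omega)$ with $g<0$ in $\Omega$, $g=0$ and $\langle Dg,\nu\rangle>0$ on $\p\Omega$ (e.g. a smooth extension of the negative distance to $\p\Omega$, using smoothness of $\p\Omega$); then $Dg$, $D^2g$ are bounded on $\ol\Omega$.

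\emph{Perturbed doubling.} Fix a smooth $\vp$ touching $\w$ from above at $(s_0,t_0)$. After the standard localization (modifying $\vp$ and the doubled test function by bumps whose effect on $\vp_t,\vp',\vp''$ and on the relevant Hessians tends to $0$) we may assume $\w(s_0,t_0)>0$ and that $(x,y,t)\mapsto u(x,t)-u(y,t)-2\vp(|x-y|/2,t)$ has a strict local maximum $0$ at some $(x_0,y_0,t_0)$ with $|x_0-y_0|=2s_0$. For small $\eps>0$, let $(x_\eps,y_\eps,t_\eps)$ maximize
$$Z_\eps(x,y,t):=u(x,t)-u(y,t)-2\vp\Big(\tfrac{|x-y|}{2},t\Big)-\eps g(x)-\eps g(y)$$
on the localized region; since $Z_\eps\ge Z$ and $\sup Z_\eps\to\sup Z=0$, we have $(x_\eps,y_\eps,t_\eps)\to(x_0,y_0,t_0)$, so for $\eps$ small $x_\eps\ne y_\eps$, $t_\eps\in(0,T)$, and $e_\eps:=(x_\eps-y_\eps)/|x_\eps-y_\eps|$ is near $e_0:=(x_0-y_0)/|x_0-y_0|$. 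Applying Theorem \ref{thm max prin semicts} with $u_1=u$, $u_2=-u$ and test function $2\vp(|x-y|/2,t)+\eps g(x)+\eps g(y)$, for each $\l>0$ we obtain $b_1,b_2\in\R$ and $X_\eps,Y_\eps\in S(n)$ with $(b_1,p_\eps,X_\eps)\in\overline{\mathcal{P}}^{2,+}u(x_\eps,t_\eps)$, $(-b_2,q_\eps,Y_\eps)\in\overline{\mathcal{P}}^{2,-}u(y_\eps,t_\eps)$, $b_1+b_2=2\vp_t$, $p_\eps=\vp'e_\eps+\eps Dg(x_\eps)$, $q_\eps=\vp'e_\eps-\eps Dg(y_\eps)$, and $\begin{pmatrix}X_\eps&0\\0&-Y_\eps\end{pmatrix}\le M_\eps+\l M_\eps^2$, where $M_\eps$ is the $(x,y)$-Hessian of the test function at $(x_\eps,y_\eps,t_\eps)$.

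\emph{Killing the boundary alternative (the crux).} If $x_\eps\in\p\Omega$, then $y_\eps\in\ol\Omega$ and convexity give $\langle e_\eps,\nu(x_\eps)\rangle\ge0$; with $\vp'\ge0$ and $\langle Dg(x_\eps),\nu(x_\eps)\rangle>0$ this forces $\langle p_\eps,\nu(x_\eps)\rangle>0$, so the Neumann term cannot be $\le0$ and the definition of viscosity subsolution yields $b_1+F(t_\eps,x_\eps,u(x_\eps,t_\eps),p_\eps,X_\eps)\le0$; the same holds trivially when $x_\eps\in\Omega$. Symmetrically, if $y_\eps\in\p\Omega$ then $\langle e_\eps,\nu(y_\eps)\rangle\le0$, so $\langle q_\eps,\nu(y_\eps)\rangle=\vp'\langle e_\eps,\nu(y_\eps)\rangle-\eps\langle Dg(y_\eps),\nu(y_\eps)\rangle<0$, and the definition of viscosity supersolution yields $-b_2+F(t_\eps,y_\eps,u(y_\eps,t_\eps),q_\eps,Y_\eps)\ge0$; trivially true when $y_\eps\in\Omega$. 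Subtracting, $2\vp_t=b_1+b_2\le F(t_\eps,y_\eps,u(y_\eps,t_\eps),q_\eps,Y_\eps)-F(t_\eps,x_\eps,u(x_\eps,t_\eps),p_\eps,X_\eps)$.

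\emph{Passing to the limit.} For $\eps$ small and $\l$ fixed, $M_\eps$ is bounded, so $X_\eps$ and $-Y_\eps$ are trapped between $-(\tfrac1\l+\|M_\eps\|)I$ and blocks of $M_\eps+\l M_\eps^2$, and $b_1,b_2$ are bounded; along $\eps\to0$ everything converges, and by closedness of the limiting semijets $(b_1^0,p_0,X_0)\in\overline{\mathcal{P}}^{2,+}u(x_0,t_0)$, $(-b_2^0,p_0,Y_0)\in\overline{\mathcal{P}}^{2,-}u(y_0,t_0)$ with $p_0=\vp'e_0$, $b_1^0+b_2^0=2\vp_t$, and $\begin{pmatrix}X_0&0\\0&-Y_0\end{pmatrix}\le M_0+\l M_0^2$, where $M_0=D^2_{x,y}(2\vp(|x-y|/2,t))$ at $(x_0,y_0,t_0)$; letting $\l\to0$ removes the $\l M_0^2$ term. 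Since $|x_0-y_0|=2s_0$ and $u(x_0,t_0)-u(y_0,t_0)=2\w(s_0,t_0)=2\vp(s_0,t_0)>0$, the structure condition \eqref{eq 1.3 structure} applies and turns the displayed inequality into $2\vp_t\le-2f(t_0,s_0,\vp,\vp',\vp'')$; undoing the localization and using continuity of $f$ gives $\vp_t+f(t_0,s_0,\vp,\vp',\vp'')\le0$, as required. The main obstacle is precisely the third step: without the perturbation $\eps g$ the Neumann alternative $\langle p,\nu\rangle=0$ can genuinely be the binding one at a boundary maximum (for instance when $\vp'(s_0,t_0)=0$, or when $x_0-y_0$ is tangent to a flat face of $\p\Omega$), and — unlike for regular solutions in \cite{AC09} — one cannot differentiate the equation to exclude it; adding $\eps g$, whose contribution disappears as $\eps\to0$, is exactly what upgrades the weak Neumann condition to the strict sign needed.
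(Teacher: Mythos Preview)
Your proposal is correct and follows essentially the same strategy as the paper's proof: perturb the doubled function by an auxiliary term that produces a strict Neumann sign on $\p\Omega$, apply Theorem~\ref{thm max prin semicts} on $\ol\Omega\times\ol\Omega$, use convexity plus $\vp'\ge0$ to kill the boundary alternative in the viscosity boundary condition, and then let the perturbation parameter vanish before invoking \eqref{eq 1.3 structure}. The paper chooses the quadratic $v(x)=\tfrac12|x-z_0|^2$ with $z_0\in\Omega$ (so that $\langle Dv,\nu\rangle\ge d(z_0,\p\Omega)>0$ directly from convexity) and moves the perturbation into the solution, writing $u_\e=u-\e v$, $u^\e=u+\e v$ and perturbed operators $F_\e$, $F^\e$; you instead put a boundary-defining function $g$ into the test function. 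Since $u_\e(x)-u^\e(y)-2\vp=u(x)-u(y)-2\vp-\e v(x)-\e v(y)$, the two set-ups are algebraically equivalent, and your bookkeeping is arguably cleaner because you avoid introducing $F_\e$, $F^\e$.

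One small point to tighten: when you invoke ``$\vp'\ge0$'' in the boundary step, the quantity actually appearing is $\vp'(s_\e,t_\e)$ with $s_\e=|x_\e-y_\e|/2$, not $\vp'(s_0,t_0)$; you only established the latter is $\ge0$. If $\vp'(s_0,t_0)=0$, then $\vp'(s_\e,t_\e)$ could be slightly negative and, since it is only $o(1)$ rather than $o(\e)$, might overpower the $\e\langle Dg,\nu\rangle$ term. The paper handles this by first approximating $\vp$ so that $\vp'(s_0,t_0)>0$ strictly (``By approximation, we may assume that $\vp'(s_0,t_0)>0$''), after which continuity gives $\vp'(s_\e,t_\e)>0$ for small $\e$; you should insert the same reduction, or equivalently replace $\vp$ by $\vp+\delta s$ and let $\delta\to0$ at the very end.
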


\begin{proof}
As in the proof of Theorem \ref{thm MC main}, we conclude that the function $Z:\overline{\Omega} \times \overline{\Omega} \times (0,T) \to \R$ defined by
$$Z(x,y,t):=u(x,t)-u(y,t)-2\vp\left(\frac{|x-y|}{2},t\right)$$
attains a local maximum zero at $(x_0,y_0,t_0) \in \overline{\Omega} \times \overline{\Omega} \times (0,T)$ with $|x_0-y_0|=2s_0$. 
If $(x_0,y_0)\in \Omega \times \Omega$, then the same argument as in the proof of Theorem \ref{thm MC main} would prove the theorem. So the strategy here is to perturb the equation to ensure that the maximum point always lies in the interior of $\Omega \times \Omega$. 
Note that $\w$ is increasing implies that $\vp'(s_0,t_0)\geq 0$. By approximation, we may assume that $\vp'(s_0,t_0) >0$. 
%so that the same argument as in the proof of Theorem \ref{thm MC main} applies. 

Pick a point $z_0\in \Omega$ and let $v(x)=\frac 1 2 (x-z_0)^2$. Then $Dv(x)=x-z_0$ and $D^2v(x)=I$.
Moreover for any $x\in \p \Omega$, 
$$\langle Dv(x), \nu(x) \rangle = \langle x-z_0, \nu(x) \rangle \geq d(z_0, \p \Omega) :=\delta >0. $$
Set
\begin{align*}
     u_\e(x,t) &= u(x,t)-\e v(x), \\
     u^\e(x,t) &= u(x,t)+\e v(x), \\
     F_\e(t,x,r,p,X)&=F(t,x,r+\e v,p+\e Dv, X+\e I),\\
     F^\e(t,x,r,p,X)&=F(t,x,r-\e v,p-\e Dv, X-\e I).
\end{align*}
%Let $u_\e(x,t) = u(x,t)-\e v(x)$, $u^\e(x,t) = u(x,t)+\e v(x)$, $F_\e(t,x,r,p,X)=F(t,x,r+\e v,p+\e Dv, X+\e I)$, and $F^\e(t,x,r,p,X)=F(t,x,r-\e v,p-\e Dv, X-\e I)$. 
Then direct calculation shows that $u_\e$ is a viscosity subsolution of 
\begin{equation}\label{}
\begin{cases}
u_t+F_\e(x, t, u, D u, D^2 u)=0, & \text{ in } \Omega \times (0,T), \\
\langle Du, \nu \rangle + \e \langle Dv, \nu \rangle =0, & \text{ on } \p \Omega \times (0,T),
\end{cases}
\end{equation} 
and 
$u^\e$ is a viscosity supersolution of 
\begin{equation}\label{}
\begin{cases}
u_t+F^\e(x, t, u, D u , D^2 u )=0, & \text{ in } \Omega \times (0,T), \\
\langle Du, \nu \rangle - \e \langle Dv, \nu \rangle =0, & \text{ on } \p \Omega \times (0,T). 
\end{cases}
\end{equation} 

We consider the following approximation of the function $Z$:
 $$Z_\e (x,y,t)=u_\e(x,t) -u^\e(y,t) -2 \vp\left(\frac{|x-y|}{2},t\right).$$
Since $Z_\e$ converges to $Z$ uniformly as $\e \to 0$, we know that $Z_\e$ has a local maximum at $(x_\e, y_\e , t_\e)$
with  $(x_\e, y_\e , t_\e) \to (x_0,y_0,t_0)$ and $2s_\e=|x_\e-y_\e| \to 2 s_0$ as $\e \to 0$.
%As usual, choose $e_n=\frac{y_\e-x_\e}{|y_\e-x_\e|}$ and 
By the parabolic maximum principle for semicontinuous functions (see Theorem \ref{thm max prin semicts}), for any $\lambda >0$, there exist $b_{1,\e }, b_{2,\e } \in \R$ and $X_\e,Y_\e\in S(n)$ such that 
\begin{align*}
    (b_{1,\e }, \vp'(s_\e, t_\e) e_\e, X_\e) & \in \overline{\mathcal{P}}^{2,+}_{\ol{\Omega}\times (0,T)} u_\e(x_\e,t_\e), \\
    (-b_{2,\e }, \vp'(s_\e, t_\e)e_\e, Y_\e) & \in \overline{\mathcal{P}}^{2,-}_{\ol{\Omega}\times (0,T)} u^\e(y_\e,t_\e),\\
    b_{1,\e }+b_{2,\e } &= 2\vp_t(s_\e, t_\e),\\
    -\left(\lambda^{-1}+\left\|M\right\| \right)I & \leq  \begin{pmatrix}
    X_\e & 0 \\
    0 & -Y_\e
    \end{pmatrix}
     \leq M+\lambda M^2,
\end{align*}
%\begin{equation*}
%     \end{equation*}
where $e_\e =\frac{x_\e -y_\e}{|x_\e -y_\e|}$ and  $M=D^2_{x,y}\left( 2 \vp \left(\frac{|x_\e-y_\e|}{2},t_\e \right) \right)$.

Since $u$ is a viscosity solution of \eqref{eq Neumann}, we have that
if $x_\e \in \Omega$, then at $(x_\e, t_\e )$,
\begin{equation}\label{x interior}
b_{1,\e} +F_\e(t_\e, x_\e, u_\e(x_\e,t_\e), \vp' e_\e , X_\e ) \leq 0,
\end{equation}
and if $x_\e \in \p \Omega$, then at $(x_\e ,t_\e )$, 
\begin{eqnarray*}
\min  \left\{b_{1,\e} +F_\e(t_\e, x_\e, u_\e(x_\e,t_\e), \vp' e_\e, X_\e ),  \vp^{\prime} \langle e_\e, \nu(x_\e)\rangle +\e \langle Dv(x_\e), \nu(x_\e) \rangle \right\} \leq 0,
\end{eqnarray*}

Since $\Omega$ is convex and $\vp'(s_\e,t_\e) \geq 0$ for $\e$ sufficiently small, we have
$\vp^{\prime} \langle e_\e, \nu(x_\e)\rangle +\e \langle Dv(x_\e), \nu(x_\e) \rangle \geq \e \delta >0 .$
Thus \eqref{x interior} is valid no matter $x_\e$ lies in $\Omega$ or on $\p \Omega$. 

Similarly,
if $y_\e \in \Omega$, then at $(y_\e ,t_\e )$,
 \begin{equation}\label{y interior}
 -b_{2,\e} +F^\e(t_\e, y_\e, u^\e(y_\e,t_\e), \vp' e_\e, Y_\e )  \geq 0,
 \end{equation}
and if $y_\e\in \p \Omega$, then at $(y_\e ,t_\e )$,
 \begin{eqnarray*}
 \max  \{-b_{2,\e}  +F^\e(t_\e, y_\e, u^\e(y_\e,t_\e), \vp' e_\e, -Y_\e ), \vp' \langle e_\e, \nu(y_\e)\rangle-\e \langle Dv(y_\e), \nu(y_\e) \rangle \} \geq 0.
 \end{eqnarray*}
Observe that $\vp' \langle e_\e, \nu(y_\e)\rangle-\e \langle Dv(y_\e), n(y_\e) \rangle \leq -\e \delta <0$, because $\Omega$ is convex and $\vp'\geq 0$.
Therefore,  \eqref{y interior}  is valid no matter $x_\e$ lies in $\Omega$ or on $\p \Omega$.
By passing to subsequences if necessary, we have
$b_{1,\e} \to b_1$, $b_{2,\e} \to b_2$, $X_\e  \to X$ and $Y_\e  \to Y$ as $\e  \to 0$. The limits satisfy
$$b_1+b_2=2\vp_t(s_0,t_0),$$
\begin{equation*}
   -\left(\lambda^{-1}+\left\|M\right\| \right)I \leq
     \begin{pmatrix}
    X & 0 \\
    0 & -Y
    \end{pmatrix}
    \leq M+\lambda M^2,
   \end{equation*}
where $M=D^2_{x,y}\left( 2 \vp \left(\frac{|x_0-y_0|}{2},t_0\right) \right)$. 
Letting $\e \to 0$ in \eqref{x interior} and \eqref{y interior} yields 
\begin{align*}
    b_1 +F(t_0,x_0,u(x_0,t_0),\vp' e_0, X) &\leq 0, \\
    -b_2+ F(t_0,y_0,u(y_0,t_0),\vp' e_0, Y) &\geq 0,
\end{align*}
where $e_0 =\frac{x_0-y_0}{|x_0-y_0|}$ and $\vp'$ is evaluated at $(s_0,t_0)$. 
The rest of the proof is exactly the same as the proof of Theorem 1.1.
\end{proof}

%\begin{remark}
%The same modulus of continuity estimate holds on manifolds for Neumann boundary problem.
%The argument is the same as in Section 3.
%\end{remark}

\section{Lipschitz Bounds and Gradient Estimates}\label{Sec Lip}

An obvious application of the modulus of continuity estimates is in proving regularity of solutions. It is well known that the heat equation evolves initial data which are very singular to solutions which are smooth for any positive time.  This is no longer true for more nonlinear equations even in one space dimension, particularly if the equation becomes degenerate when the gradient is large. Andrews and Clutterbuck \cite{AC09a}\cite{AC09} investigated the extent to which degenerate quasilinear parabolic equations smooth out irregular initial data. In particular, they gave a necessary and sufficient condition for quasilinear parabolic equations of one space variable to smooth our irregular initial data in \cite{AC09a} and  proved explicit gradient bounds for solutions of quasilinear equations such as graphical anisotropic mean curvature flows in \cite{AC09}. In this section, we further extend their results to fully nonlinear equations, as applications of the modulus of continuity estimates derived in previous sections.

As an immediate consequence of Theorem \ref{thm MC main} and \ref{thm MC Neumann}, we have 
\begin{prop}\label{prop Lipschitz}
%Let $\Omega$ be either $\R^n$ or a bounded convex domain in $\R^n$ with smooth boundary. 
Suppose that $u$ is either a spatially periodic viscosity solution of \eqref{eq 1.1 fully nonlinear} on $\R^n$ or a viscosity solution of \eqref{eq Neumann} on a bounded convex domain $\Omega \subset \R^n$ with smooth boundary.  
Let $f(t, s,\vp, \vp',\vp'')$ be a one-dimensional operator satisfying \eqref{eq 1.3 structure} and the comparison principle. 
Suppose that $\vp(s,t)$ satisfies 
\begin{enumerate}
    \item $\vp_t \geq f(t, s,\vp,\vp',\vp'')$ (in the viscosity sense);
    \item $\vp'(s,t) \geq 0$;
    \item $\vp(0,t) \geq 0$;
    %\item $\vp(s,0) \geq \psi(s,0)$,
\end{enumerate}
for all $s$ and $t$.
If $\vp(s,0)$ is a modulus of continuity for $u(\cdot, 0)$, i.e., 
\begin{equation*}
    |u(x,0)-u(y,0)| \leq 2\vp \left(\frac{|x-y|}{2},0 \right)
\end{equation*}
for all $x, y$, then $\vp(s,t)$ is a modulus of continuity for $u(x,t)$, i,e., 
\begin{equation*}
    |u(x,t)-u(y,t)| \leq 2\vp \left(\frac{|x-y|}{2},t \right)
\end{equation*}
for all $x, y$.
\end{prop}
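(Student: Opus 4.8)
The plan is to deduce Proposition \ref{prop Lipschitz} directly from the modulus of continuity estimate (Theorem \ref{thm MC main} or \ref{thm MC Neumann}) together with the assumed comparison principle for the one-dimensional operator $f$. First I would observe that by Theorem \ref{thm MC main} (respectively Theorem \ref{thm MC Neumann}, using hypothesis (2) that $\vp'\geq 0$, which forces $\w$ to be increasing so that the Neumann version applies), the optimal modulus of continuity $\w(s,t)$ of $u$ is a viscosity subsolution of $\w_t+f(t,s,\w,\w',\w'')=0$ on $(0,\infty)\times(0,T)$ (or on $(0,D/2)\times(0,T)$ in the Neumann case). On the other hand, hypothesis (1) says precisely that $\vp$ is a viscosity supersolution of the same one-dimensional equation. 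So we have a sub/supersolution pair and the task reduces to comparing them.

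The key step is to set up the comparison on the half-line with the correct boundary and initial conditions. At $t=0$, the hypothesis that $\vp(\cdot,0)$ is a modulus of continuity for $u(\cdot,0)$ gives $\w(s,0)\le\vp(s,0)$ for all $s\ge 0$. At the spatial boundary $s=0$, I would use hypothesis (3), $\vp(0,t)\ge 0$, together with the elementary fact that $\w(0,t)\le 0$ (indeed $\w(0^+,t)=\lim_{s\to 0}\sup\{(u(x,t)-u(y,t))/2:|x-y|=2s\}\le 0$ by continuity of $u$ and by the fact that one may take $x=y$; more carefully $\w(0,t)=0$ when $u$ is continuous and nonconstant, and $\le 0$ always), so $\w(0,t)\le 0\le\vp(0,t)$. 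Thus $\vp$ dominates $\w$ on the parabolic boundary of $(0,\infty)\times(0,T)$ (or $(0,D/2)\times(0,T)$). Invoking the comparison principle assumed for $f$, we conclude $\w(s,t)\le\vp(s,t)$ for all $s$ and $t$. Unwinding the definition of $\w$, this says exactly $u(x,t)-u(y,t)\le 2\vp(|x-y|/2,t)$ for all $x,y$; swapping $x$ and $y$ (and using that $\vp$ depends only on $|x-y|$) gives the absolute value, which is the claim.

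I expect the main obstacle, or rather the main point requiring care, to be the handling of the spatial boundary behavior at $s=0$ and — in the Neumann case — at $s=D/2$, since the modulus of continuity is a priori only defined and shown to be a subsolution on the open interval. For the lower endpoint one needs $\w(0,t)\le 0$, which follows from continuity of $u$ (a spatially periodic, hence bounded uniformly continuous, or Neumann solution), combined with hypothesis (3). For the Neumann case, since $s$ ranges only over $(0,D/2)$ with $D=\operatorname{diam}\Omega$, there are no pairs $x,y\in\Omega$ with $|x-y|>D$, so the estimate for $s\in(0,D/2)$ already covers all pairs $x,y$, and no separate treatment of the right endpoint is needed. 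A secondary technical point is that the comparison principle must be applied on a possibly unbounded spatial domain $(0,\infty)$ in the periodic case; since $u$ is bounded, $\w$ is bounded, and one should note that the hypothesis ``$f$ satisfies the comparison principle'' is to be understood in a form (e.g., with a growth restriction, or after noting $\w$ is bounded) that legitimately applies here. Once these boundary matters are pinned down, the argument is a one-line invocation of comparison.
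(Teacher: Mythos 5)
Your strategy---$\w$ is a viscosity subsolution by Theorem \ref{thm MC main} or \ref{thm MC Neumann}, $\vp$ is a supersolution by hypothesis (1), order the two on the parabolic boundary (at $t=0$ by assumption, at $s=0$ via $\w(0^+,t)=0\le\vp(0,t)$ from hypothesis (3)), and invoke the assumed comparison principle---is the natural reading of the statement, and in the spatially periodic case it is essentially sound (your caveat about the unbounded $s$-interval is reasonable, since $\w$ is bounded and the comparison hypothesis is stated loosely). The paper instead implements the comparison by hand: it perturbs to $\vp_\e=\vp+\e e^t$, which is a strict supersolution, and observes via the touching argument of Theorems \ref{thm MC main}/\ref{thm MC Neumann} that $\vp_\e$ cannot touch $\w$ from above, then lets $\e\to 0$. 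These are close in spirit, but the distinction matters for the Neumann case, where your argument has two genuine gaps.

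First, your justification for applying Theorem \ref{thm MC Neumann} is incorrect: hypothesis (2) is a condition on the comparison function $\vp$, not on $u$, and $\vp'\ge 0$ does not force the modulus of continuity $\w$ of $u$ to be increasing (e.g.\ a tent-shaped $u$ on an interval has a non-monotone $\w$). Theorem \ref{thm MC Neumann} asserts the subsolution property only when $\w$ is increasing, so you cannot hand $\w$ to a black-box comparison principle. The repair is to note that monotonicity of $\w$ enters the proof of Theorem \ref{thm MC Neumann} only through the sign $\vp'(s_0,t_0)\ge 0$ of the touching test function; since your $\vp$ (and $\vp+\e e^t$) has $\vp'\ge 0$ by hypothesis (2), the touching argument applies at any contact point---which is precisely why the paper argues by ``no touching'' with $\vp_\e$ rather than by citing comparison abstractly. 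Second, to run comparison on $(0,D/2)\times(0,T)$ you need $\w\le\vp$ on the whole parabolic boundary, including $s=D/2$; that ordering is part of what is being proved, not a datum, and your remark that no treatment of the right endpoint is needed conflates coverage of the final estimate with the boundary data required by the comparison principle. The direct two-point argument avoids this as well: if the first contact of $\vp_\e$ with $\w$ occurs with $|x_0-y_0|=D$, the proof of Theorem \ref{thm MC Neumann} still applies there, since it only requires $x_0\ne y_0$.
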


Here we say the one-dimensional operator $f(t,s,\vp,\vp',\vp'')$
satisfies the comparison principle if a subsolution is no bigger than a supersolution provided that this is true on the boundary and initially. This is satisfied by all examples given below. 

\begin{proof}[Proof of Proposition \ref{prop Lipschitz}]
For $\e>0$, the function $\vp_\e =\vp+\e e^t$ satisfies 
$$(\vp_\e)_t > f(t, s,\vp_\e, \vp_\e', \vp_\e''),$$ so it cannot touch the modulus of continuity $\w$ from above by Theorem \ref{thm MC main} or \ref{thm MC Neumann}. 
\end{proof}

Proposition \ref{prop Lipschitz} provides bounds on the modulus of continuity of solutions in terms of the initial modulus of continuity and elapsed time.
This then provides gradient estimates for $u$ at positive times, provided the particular solution of the one-dimensional equation has bounded gradient for positive time. Below we elaborate how to obtain such time-interior gradient estimates for fully nonlinear equations. 

For convenience, we introduce Assumption \eqref{F Ellipticity} which will be frequently used in this section. 
\begin{equation}\label{F Ellipticity}
     \nonumber
    \begin{cases}
    & F(t,r,p,X) \text{ is increasing in } r, 
    \text{ and } \\
    \tag{E} & F(t,r,p,X+Z) -F(t,r,p,X) \leq -\l(|p|) \tr(Z) \text{ for } Z \geq 0, \\ \nonumber
    & \text{where $\l(s)$ is  a nonnegative function }.
    \end{cases}
\end{equation}
%\begin{align}\label{F Ellipticity}
%     \nonumber
%    & F(t,r,p,X) \text{ is increasing in } r, 
%    \text{ and } \\
%    \tag{E} & F(t,r,p,X+Z) -F(t,r,p,X) \leq -\l(|p|) \tr(Z) \text{ for } Z \geq 0, \\ \nonumber
%    & \text{where $\l(s)$ is  a nonnegative function }.
%\end{align}
The function $\l(s)$ measure the ellipticity of $F$. 
The time reparametrization $t \to ct $ for $c>0$ changes $F$ to $cF$, so it suffices to consider $\l(s)$ up to multiplying by a positive constant.

Let's first consider the heat equation $u_t=\Delta u$, which was discussed in \cite[Section 2]{Andrewssurvey15}. If the initial data $u_0$ is bounded, say $|u_0(x)| \leq M$, then $\vp_0(s)\equiv \frac{M}{2}$ is a modulus of continuity for $u_0$. It's easy to see that the one-dimension equation can be chosen to be the one-dimensional heat equation $\vp_t=\vp''$. With $\vp_0 \equiv \frac{M}{2}$ as initial data, the solution is given by 
\begin{equation*}
    \vp(s,t)=\frac{M}{2}\erf\left(\frac{s}{2\sqrt{t}} \right), 
\end{equation*}
where $\erf$ stands for the error function defined by
\begin{equation*}
    \erf(s)=\frac{2}{\sqrt{\pi}}\int_0^s e^{-t^2} dt.
\end{equation*}
By Proposition \ref{prop Lipschitz}, we have that for all $x$ and $y$  
\begin{equation*}
    |u(x,t)-u(y,t)| \leq M \erf\left(\frac{s}{4\sqrt{t}} \right).
\end{equation*}
Letting $y$ to $x$ then yields 
\begin{equation*}
    |Du(x,t)| \leq \frac{M}{2\sqrt{\pi t}},
\end{equation*}
for all $x$. 
Moreover, the estimate is sharp, with equality holding for the error function one-dimensional solution. 
%if $u$ is one-dimensional, i.e., $u(z_1, \cdots, z_n,t) =\vp(z_1,t)$. 
%This also shows that the modulus of continuity estimates are sharp, with equality at all points for the form $x_0=(z_1,z_2,\cdots,z_n)$ and $y_0=(-z_1, z_2, \cdots, z_n)$ and any $t_0 \geq 0$. 

It is remarkable that we can get the same conclusion for any fully nonlinear operator $F(t,r,p,X)$ that is uniformly elliptic, independent of $x$, and increasing in $u$.
\begin{prop}\label{prop gradient}
Let $u$ be as in Proposition \ref{prop Lipschitz}. Assume further that the operator $F(t,r,p,X)$ satisfies Assumption \eqref{F Ellipticity} with $\l(s) \equiv 1$.
If $|u(x,0)-u(y,0)|\leq M$ for some $M>0$, then 
\begin{equation*}
    |u(x,t)-u(y,t)| \leq M \erf \left(\frac{|x-y|}{4\sqrt{ t }} \right)
\end{equation*}
for all $x,y$ and $t \in (0,T)$.
In particular, 
\begin{equation*}
    |Du(x,t)| \leq \frac{M}{2\sqrt{\pi t}}
\end{equation*}
whenever it exists. 
\end{prop}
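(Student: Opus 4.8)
The plan is to apply Proposition \ref{prop Lipschitz} with the one-dimensional operator $f(\vp'') = -\vp''$ and the explicit solution built from the error function. First I would verify that this choice of $f$ satisfies the structure condition \eqref{eq 1.3 structure}: under Assumption \eqref{F Ellipticity} with $\l \equiv 1$, the operator $F(t,r,p,X)$ is increasing in $r$ and satisfies $F(t,r,p,X+Z) - F(t,r,p,X) \le -\tr(Z)$ for $Z \ge 0$, so for $x,y$ with $|x-y| = 2s > 0$, $v - r = 2\vp > 0$, and $X, Y$ as in \eqref{eq 1.3 structure}, Lemma \ref{lemma} gives $X \le Y$ and $\tr(X - Y) \le 2\vp''$; combining monotonicity in $r$ (which handles the passage from $v$ to $r$) with the ellipticity estimate yields $F(t,r,\vp' e, Y) - F(t,v,\vp' e, X) \le F(t,r,\vp' e, Y) - F(t,r,\vp' e, X) \le \tr(X - Y) \le 2\vp''$, so $f = -\vp''$ works. (This is essentially Parts (3) and (4) of Proposition \ref{prop identify 1D} combined.)

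Next I would set up the comparison function. Since $|u(x,0) - u(y,0)| \le M$, the constant function $\vp_0(s) \equiv M/2$ is a modulus of continuity for $u(\cdot, 0)$. The solution of the one-dimensional heat equation $\vp_t = \vp''$ on $(0,\infty) \times (0,T)$ with initial data $M/2$ and the natural boundary behavior $\vp(0,t) = 0$ is
\begin{equation*}
    \vp(s,t) = \frac{M}{2}\,\erf\!\left(\frac{s}{2\sqrt{t}}\right),
\end{equation*}
which is smooth for $t > 0$, increasing in $s$, concave in $s$, nonnegative, and satisfies $\vp(0,t) = 0 \ge 0$ as well as $\vp'(s,t) \ge 0$. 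One checks directly that $\vp_t = f(\vp'') = -\vp''$ holds as an equality (hence in particular the inequality $\vp_t \ge f(t,s,\vp,\vp',\vp'')$ of hypothesis (1) of Proposition \ref{prop Lipschitz}), and that hypotheses (2) and (3) hold. The one-dimensional heat operator satisfies the comparison principle.

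Then I would invoke Proposition \ref{prop Lipschitz} directly: since $\vp(s,0^+) = \lim_{t \to 0^+}\frac{M}{2}\erf(s/(2\sqrt t)) = M/2$ for $s > 0$ dominates the initial modulus of continuity (a small approximation argument or a shift $t \to t + \delta$ handles the degeneracy at $t = 0$), we conclude $|u(x,t) - u(y,t)| \le 2\vp(|x-y|/2, t) = M\,\erf(|x-y|/(4\sqrt t))$ for all $x, y$ and $t \in (0,T)$. Finally, letting $y \to x$ and using that $\erf$ is differentiable at $0$ with $\erf'(0) = 2/\sqrt\pi$, the right-hand side behaves like $M \cdot \frac{2}{\sqrt\pi} \cdot \frac{|x-y|}{4\sqrt t} = \frac{M|x-y|}{2\sqrt{\pi t}}$, giving $|Du(x,t)| \le \frac{M}{2\sqrt{\pi t}}$ wherever $Du$ exists.

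The main obstacle I anticipate is purely bookkeeping rather than conceptual: making rigorous the matching of initial data for the degenerate-at-$t=0$ error function solution (the initial modulus of continuity $M/2$ is only attained as a limit), and confirming that $\vp(0,t) = 0$ is the correct boundary condition to use so that $\vp$ genuinely serves as an admissible supersolution in Proposition \ref{prop Lipschitz}; this is handled by the standard device of replacing $\vp$ by $\vp + \varepsilon e^t$ (already used in the proof of Proposition \ref{prop Lipschitz}) or by translating time by a small $\delta > 0$ and letting $\delta \to 0$. The verification of the structure condition and all the calculus facts about $\erf$ are routine.
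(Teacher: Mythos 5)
Your proposal is correct and follows essentially the same route as the paper: you verify the structure condition \eqref{eq 1.3 structure} for $f(\vp'')=-\vp''$ (your direct argument via Lemma \ref{lemma} is precisely the content of Part (5) of Proposition \ref{prop identify 1D} with $L=K=0$, which is what the paper cites), take the error-function solution of the one-dimensional heat equation with initial value $M/2$, apply Proposition \ref{prop Lipschitz}, and let $y\to x$ for the gradient bound. The only slip is the line ``$\vp_t=f(\vp'')=-\vp''$'': the error-function profile satisfies $\vp_t=\vp''$, i.e.\ $\vp_t+f(\vp'')=0$, so it is an exact solution (hence supersolution) of the one-dimensional equation $\w_t+f=0$ in the sense actually used in Proposition \ref{prop Lipschitz}; this sign-convention hiccup is inherited from the wording of hypothesis (1) there and does not affect the argument, and your explicit handling of the degenerate initial data at $t=0$ (shifting $t\to t+\delta$) is a point the paper passes over silently.
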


\begin{proof}
%By replacing $u(x,t)$ by $u(x,t/\l)$, we may assume $\l=1$. 
By Part (5) of Proposition \ref{prop identify 1D}, the one-dimensional equation can be chosen to be $\vp_t=\vp''$ since $F$ satisfies \eqref{F Ellipticity} with $\l(s) \equiv 1$. Its solution with $\vp(s,0)\equiv M/2$ is given by 
$$\vp(s,t)=\frac{M}{2}\erf\left(\frac{s}{2\sqrt{t}} \right).$$
The desired Lipschitz estimates then follows from Proposition \ref{prop Lipschitz} and the gradient estimates follows by letting $y \to x$. 
\end{proof}

%As mentioned above, Proposition \ref{prop gradient} is sharp for the Laplace operator with the error function as solution. 
%We emphasis that it is also sharp for fully nonlinear operators. For $0<\l \leq \Lambda < \infty$, consider the Pucci's extremal operator defined by 
%\begin{equation*}
%    M^+_{\l,\Lambda}(X) =-\left( \Lambda \sum_{\mu_i >0} \mu_i +\lambda \sum_{\mu_i <0} \mu_i  \right) 
%\end{equation*}
%where $\mu_i$'s are eigenvalues of $X$. When $\l=\Lambda=1$, it reduces to the Laplace operator. It is easy to see $M^-_{\l, \Lambda}$ is uniformly elliptic with ellipticity constants $0<\l \leq \Lambda < \infty$. If $u(z_1,\cdots, z_n,t)=\vp(z_1,t)$ and $\vp$ is odd, increasing and concave for each fixed $t$, then $u$ satisfies $u_t+M^+_{1, \Lambda}(D^2 u)=0$ if and only if $\vp_t= \vp''$. Thus the Lipschitz bounds and gradient estimates are sharp for the fully nonlinear $M^+_{1, \Lambda}$ with error function as solution. 

The situation where the ellipticity $\l$ depends on $|p|$ is more complicated, even for quasilinear equations of one space variable \cite{AC09a}. But there are some cases where the one-dimensional equations can be solved and explicit gradient bounds can be obtained. Below we extract several examples from \cite{AC09} to further illustrate how the dependence of $\l$ on $|p|$ affect the time-interior gradient estimates.  

Let's consider an operator $F$ satisfying \eqref{F Ellipticity} with $\l(s)=(p-1)|s|^{p-2}$, where $1<p<\infty$.
Such $F$ could be $-(p-1)|Du|^{p-2}\Delta u$, the $p$-Laplacian $\Delta_p u :=-\text{div}(|Du|^{p-2}Du)$, or $-(p-1)|Du|^{p-2}M^+_{1,\Lambda}(D^2 u)$. 
The solution of the one-dimensional $p$-Laplacian heat flow $\vp_t =(p-1)|\vp'|^{p-2}\vp''$ with initial data $\vp_0 \equiv M/2$ is given by (see \cite[page 359]{AC09}) 
\begin{equation}\label{1D p}
    \vp(s,t)=\frac{M}{2}\frac{1}{2F_p(\infty)}F_p\left(\frac{s}{t^{1/p}R_p }\right),
\end{equation}
where 
\begin{equation*}
    F_p(z)=\begin{cases}
    \int_0^z (1+s^2)^{\frac{1}{p-2}} ds,  &  1<p<2; \\
    \int_0^z e^{-s^2} ds, & p=2;\\
    \int_0^z (1-s^2)_+^{\frac{1}{p-2}} ds, & p >2;
    \end{cases}
\end{equation*}
$F_p(\infty) =\lim_{z \to \infty} F_p(z)$, and 
\begin{equation*}
    R_p=\begin{cases}
    \left(\frac{2-p}{2p(p-1)} \right)^{\frac{-1}{p}} \left(2F_p(\infty) \right)^{\frac{2-p}{p}}, & 1<p<2;\\
    2, & p=2;\\
    \left(\frac{2p(p-1)}{p-2}\right)^{\frac{1}{p}} \left(2F_p(\infty) \right)^{\frac{2-p)}{p}}, & p>2.
    \end{cases}
\end{equation*}

Thus we obtain with Theorem \ref{thm MC main} and Theorem \ref{thm MC Neumann} that 
\begin{prop}\label{prop gradient p}
Let $u$ be as in Proposition \ref{prop Lipschitz}. Assume further that the operator $F(t,r,p,X)$ satisfies Assumption \eqref{F Ellipticity} with $\l(s) \equiv (p-1)|s|^{p-2}$, where $1<p<\infty$.
If $|u(x,0)-u(y,0)|\leq M$ for some $M>0$, then 
\begin{equation*}
    |u(x,t)-u(y,t)| \leq 2 \vp \left(\frac{|x-y|}{2}, t \right)
\end{equation*}
for all $x,y$ and $t \in (0,T)$, where $\vp$ is given by \eqref{1D p}. 
In particular, 
\begin{equation*}
    |Du(x,t)| \leq \frac{1}{2R_pF_p(\infty)} \frac{M^{\frac{2}{p}}}{t^{\frac{1}{p}}}
\end{equation*}
whenever it exists.
\end{prop}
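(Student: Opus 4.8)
The plan is to apply Proposition \ref{prop Lipschitz} with the one-dimensional operator $f(\vp') = -(p-1)|\vp'|^{p-2}\vp''$, which by Part (5) of Proposition \ref{prop identify 1D} (taking $L=K=0$ and $\l(s)=(p-1)|s|^{p-2}$) satisfies the structure condition \eqref{eq 1.3 structure}; it also satisfies the comparison principle, being a degenerate quasilinear equation of one space variable. So the bulk of the work is to verify that the explicit function $\vp(s,t)$ given in \eqref{1D p} is a genuine solution of the one-dimensional equation $\vp_t = (p-1)|\vp'|^{p-2}\vp''$ with initial data $\vp(s,0)\equiv M/2$, and that it satisfies hypotheses (2) and (3) of Proposition \ref{prop Lipschitz}, namely $\vp'(s,t)\ge 0$ and $\vp(0,t)\ge 0$.

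First I would check the self-similar ansatz directly. Writing $\vp(s,t) = \frac{M}{4F_p(\infty)} F_p\!\left(s\, t^{-1/p} R_p^{-1}\right)$ and setting $z = s\,t^{-1/p}R_p^{-1}$, one computes $\vp_t$ and $\vp''$ in terms of $F_p'(z)=(1\pm z^2)^{1/(p-2)}$ (or $e^{-z^2}$ when $p=2$) and $F_p''(z)$, and verifies that the equation $\vp_t=(p-1)|\vp'|^{p-2}\vp''$ reduces to an ODE identity satisfied by $F_p$; this is precisely the computation behind the constant $R_p$ and is recorded in \cite[page 359]{AC09}, so I would cite that and indicate the scaling check rather than reproduce it. Since $F_p$ is increasing on its domain (the integrand is nonnegative) and $F_p(0)=0$, we get $\vp'(s,t)\ge 0$ for $s\ge 0$ and $\vp(0,t)=0\ge 0$, so hypotheses (2) and (3) hold; hypothesis (1) holds with equality. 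The initial condition $\vp(s,0)\equiv M/2$ follows because as $t\to 0^+$ the argument $z\to\infty$ for every fixed $s>0$, and $\frac{M}{4F_p(\infty)}\cdot 2F_p(\infty) = M/2$. Finally, at $t=0$ the function $\vp_0\equiv M/2$ is a modulus of continuity of $u(\cdot,0)$ because $|u(x,0)-u(y,0)|\le M = 2\cdot\frac{M}{2}$.

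Proposition \ref{prop Lipschitz} then gives $|u(x,t)-u(y,t)| \le 2\vp\!\left(\frac{|x-y|}{2},t\right)$ for all $x,y$ and $t\in(0,T)$, which is the first claim. For the gradient bound, I would let $y\to x$: using $|u(x,t)-u(y,t)|\le 2\vp\!\left(\frac{|x-y|}{2},t\right)$ and $\vp(0,t)=0$,
\[
|Du(x,t)| \le \lim_{y\to x}\frac{2\vp\!\left(\frac{|x-y|}{2},t\right)}{|x-y|} = \vp'(0,t).
\]
From \eqref{1D p}, $\vp'(0,t) = \frac{M}{4F_p(\infty)}\cdot\frac{1}{t^{1/p}R_p}\cdot F_p'(0) = \frac{M}{4F_p(\infty) R_p\, t^{1/p}}$ since $F_p'(0)=1$. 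To obtain the stated $M^{2/p}$ dependence one must account for the scaling built into $R_p$: the constant $R_p$ as written in the proposition is the one appearing when the initial height is normalized, and carrying the factor $M/2$ through the self-similar rescaling $s\mapsto s(M/2)^{(2-p)/p}$ converts $\frac{M}{t^{1/p}}$ into $\frac{M^{2/p}}{t^{1/p}}$ up to the constant $\frac{1}{2R_pF_p(\infty)}$; I would make this rescaling explicit to land exactly on $|Du(x,t)|\le \frac{1}{2R_pF_p(\infty)}\frac{M^{2/p}}{t^{1/p}}$.

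The main obstacle I anticipate is purely bookkeeping: getting the powers of $M$ and the numerical constant $R_p$, $F_p(\infty)$ to match the claimed estimate, since the self-similar solution has a nontrivial dependence on the initial height through the degenerate scaling of the $p$-Laplacian (the diffusion is not scale-invariant the way the linear heat equation is). There is no analytic difficulty beyond verifying the ODE identity for $F_p$ and tracking this rescaling carefully; the viscosity-solution machinery is entirely absorbed into Proposition \ref{prop Lipschitz}.
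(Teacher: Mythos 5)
Your proposal is correct and follows essentially the same route as the paper: the paper simply invokes Part (5) of Proposition \ref{prop identify 1D} to take $f(\vp')=-(p-1)|\vp'|^{p-2}\vp''$, cites \cite[page 359]{AC09} for the explicit self-similar solution \eqref{1D p} with initial data $M/2$, and then applies Proposition \ref{prop Lipschitz} and lets $y\to x$ for the gradient bound. Your extra care about the amplitude scaling is warranted rather than a deviation: since the $p$-Laplacian flow is not amplitude-invariant for $p\neq 2$, the similarity variable must carry a factor $(M/2)^{(2-p)/p}$ (and the prefactor must be normalized so that $\vp(\cdot,0^+)\equiv M/2$) for \eqref{1D p} to solve the one-dimensional equation and yield the stated $M^{2/p}t^{-1/p}$ bound, a point the paper absorbs into the citation of \cite{AC09} rather than spelling out.
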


Another example is when $F$ satisfies Assumption \eqref{F Ellipticity} with $\l(s)=\frac{1}{1+s^2}$. Such $F$ could be $\frac{1}{1+|Du|^2} \Delta u$ or $\frac{1}{1+|Du|^2}M^+_{1,\Lambda}(D^2 u)$. 
In this case, one can use the results in %[Theorem 7 and Corollary 8]\cite{AC09a}
\cite[Theorem 2.1]{AC09} to conclude that if $|u_0| \leq M$, then 
\begin{equation*}
    1+|Du(x,t)|^2 \leq \exp\left(\frac{2M^2}{t} \right)
\end{equation*}
whenever it exists. 
Similarly as in \cite[Corollary 3.2]{AC09}, if $F$ satisfies Assumption \eqref{F Ellipticity} with $\l(s, t)$ satisfying that there exist positive constants $A_0$ and $P_0$ such that 
\begin{equation*}
    \l(s, t) \geq \frac{A_0}{s^2} \text{ for } s \geq P_0  \text{ and all } t, 
\end{equation*}
then 
\begin{equation*}
    |Du(x,t)| \leq P_0 \exp \left(1+\frac{M^2}{A_0 t} \right) 
\end{equation*}
whenever it exists. Such equations include the anisotropic mean curvature flows. 

%We refer the reader to \cite{AC09a} and \cite[pages 9-14]{AC09} for the details. 

%The The $p$-Laplacian operator $\Delta_p u :=\text{div}(|Du|^{p-2}Du)$ with $1<p<\infty$ is 
%Nevertheless, 

Finally, under some condition on $\l(|p|)$, we can use the results in \cite{AC09a} to get gradient bounds for any positive time if the initial data is bounded. 
%However, if the one-dimension operator is the one-dimensional $p$-Laplacian given by $(p-1)|\vp'|^{p-2}\vp''$ with $1<p<\infty$, then we can get explicit Lipschitz bounds and gradient estimates as in \cite[page 5]{Andrewssurvey15}. 

%\begin{prop}\label{prop gradient}
%Let $u$ be as in Proposition \ref{prop Lipschitz}. Assume further that the operator $F(t,r,p,X)$ is increasing in $r$ and there exists a positive function $\l >0$ such that for all $t,r,p,X$, 
%\begin{equation}\label{F Uniformly Elliptic}
%    F(t,r,q,X+Z) -F(t,r,q,X) \leq  \l(p-1)|q|^{p-2} \tr(Z) \text{ for } Z \geq 0.
%\end{equation} 
%f $|u(x,0)-u(y,0)|\leq M$ for some $M>0$, then 
%\begin{equation*}
%    |u(x,t)-u(y,t)| \leq M \erf \left(\frac{|x-y|}{4\sqrt{\l t }} \right)
%\end{equation*}
%for all $x,y$ and $t \in (0,T)$.
%In particular, then 
%\begin{equation*}
%    |Du(x,t)| \leq \frac{M}{2\sqrt{\pi \l t}}
%\end{equation*}
%whenever it exists. 
%\end{prop}

\begin{prop}
Let $u$ be as in Proposition \ref{prop Lipschitz}. Assume further that the operator $F(t,r,p,X)$ satisfies Assumption \eqref{F Ellipticity} with $\l(s)$ satisfying $\int_0^a s \l(s) ds \to \infty$ as $a \to \infty$. 
%is increasing in $r$ and there exists a positive function $\l(s)$ such that
%\begin{equation}
%    F(t,r,p,X+Z) -F(t,r,p,X) \leq \l(|p|) \tr(Z) \text{ for } Z \geq 0.
%\end{equation} 
%and 
If $|u(x,0)-u(y,0)|\leq M$ for some $M>0$ and $u$ is $C^1$, then there exist a constant $C$ depending only on $M$ and $t$ such that 
\begin{equation*}
    |Du(x,t)| \leq C 
\end{equation*}
for all $x,y$ and $t \in (0,T)$.
\end{prop}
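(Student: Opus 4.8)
The plan is to combine the modulus of continuity estimate from Proposition \ref{prop Lipschitz} with the corresponding one-dimensional result of Andrews and Clutterbuck \cite{AC09a}. The key point is that the hypothesis $\int_0^a s\,\l(s)\,ds \to \infty$ as $a \to \infty$ is precisely the condition identified in \cite{AC09a} under which the one-dimensional quasilinear equation $\vp_t = \l(|\vp'|)\vp''$ (or more generally $\vp_t = \a(\vp')\vp''$ with $\a$ related to $\l$) smooths bounded initial data, producing a solution $\vp(\cdot,t)$ with $\vp'(\cdot,t)$ bounded for each fixed $t>0$.

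First I would set $\vp_0(s) \equiv M/2$, which is a modulus of continuity for $u(\cdot,0)$ since $|u(x,0)-u(y,0)| \leq M$. By Part (5) of Proposition \ref{prop identify 1D} (applied with $L=K=0$, $x$-independence, and ellipticity function $\l$), the natural one-dimensional operator is $f(s,\vp,\vp',\vp'') = -\l(|\vp'|)\vp''$, so the associated one-dimensional equation is $\vp_t = \l(|\vp'|)\vp''$. Next I would invoke the one-dimensional existence and smoothing theory: evolving $\vp_0 \equiv M/2$ by this equation (on the relevant spatial interval, odd-reflected, with the appropriate boundary behavior) yields a solution $\vp(s,t)$ that is odd, concave and positive in $s>0$, satisfies $\vp'(s,t) \geq 0$ and $\vp(0,t) \geq 0$, and — crucially — by the criterion $\int_0^a s\l(s)\,ds \to \infty$ from \cite{AC09a}, has the property that $\sup_s \vp'(s,t) =: C(M,t) < \infty$ for each $t>0$. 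This $\vp$ then verifies hypotheses (1)--(3) of Proposition \ref{prop Lipschitz}, so $\vp(s,t)$ is a modulus of continuity for $u(x,t)$: $|u(x,t)-u(y,t)| \leq 2\vp\bigl(\tfrac{|x-y|}{2},t\bigr)$ for all $x,y$.

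Finally, since $u$ is assumed $C^1$, I would let $y \to x$ in the modulus of continuity bound. Writing $x-y = h\,\xi$ with $|\xi|=1$ and $h \to 0^+$, we get $|\langle Du(x,t),\xi\rangle| = \lim_{h\to 0^+} \tfrac{|u(x,t)-u(x-h\xi,t)|}{h} \leq \lim_{h\to 0^+} \tfrac{2\vp(h/2,t)}{h} = \vp'(0,t) \leq C(M,t)$, and taking the supremum over unit vectors $\xi$ gives $|Du(x,t)| \leq C(M,t)$, uniformly in $x$.

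The main obstacle I expect is making the one-dimensional input rigorous and self-contained: one must either cite \cite{AC09a} for the precise statement that $\int_0^a s\l(s)\,ds \to \infty$ implies a (viscosity, or regular) solution of $\vp_t = \l(|\vp'|)\vp''$ with bounded-gradient-for-positive-time exists with constant initial data, or reprove enough of it here. There are also mild technical matters — justifying that the one-dimensional solution can be taken to satisfy the oddness/concavity/monotonicity needed for hypotheses (2)--(3) of Proposition \ref{prop Lipschitz}, and handling the spatial domain (a half-line for the periodic case, $(0,D/2)$ for the Neumann case, with the change of variables $\a(p) \leftrightarrow \l(|p|)$ that converts the divergence-form smoothing criterion of \cite{AC09a} into the form needed here). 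Once the one-dimensional fact is in hand, the rest is a direct application of Proposition \ref{prop Lipschitz} and a limiting argument.
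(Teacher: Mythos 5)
Your proposal is correct and follows essentially the same route as the paper: identify the one-dimensional equation $\vp_t=\l(|\vp'|)\vp''$ via Proposition \ref{prop identify 1D}, invoke the criterion from \cite[Section 5]{AC09a} that $B(a)=\int_0^a s\l(s)\,ds\to\infty$ guarantees a gradient bound for the one-dimensional solution with bounded initial data, and conclude through Proposition \ref{prop Lipschitz} by letting $y\to x$. Your citation of Part (5) of Proposition \ref{prop identify 1D} is in fact the more accurate reference (the paper cites Part (3), which treats constant $\l$), and your extra detail about the one-dimensional solution's properties only fleshes out what the paper leaves implicit.
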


\begin{proof}
By part (3) of Proposition \ref{prop identify 1D}, the one-dimensional equation can be chosen to be $\vp_t=\l(|\vp'|)\vp''$. It was shown in \cite[Section 5]{AC09a} that $\vp$ has its gradient bounded for all positive times by a constant depending only on $t$ and the initial oscillation if and only if $\lim_{s\to \infty} B(s)=\infty$, where $B(s)=\int_0^s t \l (t) dt$. Thus we have the desired gradient bounds. 
\end{proof}

\section{Extensions to Riemannian Manifolds}

In this section, we extend the modulus of continuity estimates to fully nonlinear parabolic equations on Riemannian manifolds. This is mostly a straightforward matter, but requires some adaptions and modifications to overcome the non-smoothness of the Riemannian distance function. For the basic theory of viscosity solutions on manifolds, see \cite{AFS08}\cite{I}\cite{Zhu11}.

Let $(M^n,g)$ be an $n$-dimensional complete Riemannian manifold without boundary. The Riemannian distance function $d(x,y)$ is given by 
\begin{equation*}
    d(x,y)=\inf \{L[\gamma]: \text{ $\gamma$ is a smooth path from $x$ to $y$} \}.
\end{equation*}
where $L[\gamma]$ stands for the length of $\gamma$. 
The modulus of continuity $\w$ of a function $u:M \to \R$ is defined similarly as before by 
\begin{equation*}
    \w(s):=\sup \left\{\frac{u(x)-u(y)}{2} : x,y \in M, d(x,y)=2s \right\}.
\end{equation*}

As in the Euclidean case, we consider parabolic equations of the form 
%a proper and continuous fully nonlinear operator of the form
\begin{equation}\label{}
   u_t+F(t,x,u,Du,D^2 u)=0,
\end{equation}
where $F:[0,T] \times M \times \R \times TM \times \text{Sym}^2T^*M \to \R$ is degenerate elliptic and continuous, $Du$ and $D^2 u$ denote the gradient and Hessian of $u$ with respect to the Levi-Civita connection, and $TM$, $T^*M$ and $\text{Sym}^2T^*M$ denote the tangent bundle, the cotangent bundle and the set of symmetric two tensors on $M$, respectively. 
%As before, we assume that $F$ is proper and continuous. 

We would like to introduce a structure condition that is analogous to \eqref{eq 1.3 structure} in the Euclidean setting. One simply needs to replace $|x-y|$ in \eqref{eq 1.3 structure} by $d(x,y)$, i.e.,  
\begin{equation}\label{eq 6.3 structure mfd}
    \begin{cases}%\nonumber
    & F\left(t,y,r,-D_y\psi, Y\right) -F\left(t,x,v, D_x\psi, X\right) \leq -2f(t, s, \vp,\vp',\vp'') \\ 
   \tag{SCM} & \text{ for all } 
    x, y \in M \text{ with } d(x,y)=2s>0,  v,r \in \R \text{ with } v-r=\psi >0, \\ \nonumber
    & \text{ and } X\in \text{Sym}^2T_x^*M, Y\in \text{Sym}^2T_y^*M\text{ satisfying } 
     \begin{pmatrix}
    X & 0 \\ 0 & -Y 
    \end{pmatrix}
    \leq D^2_{x,y} \psi(x,y,t),
    \end{cases}
\end{equation}
%\begin{equation}\label{eq 6.3 structure mfd}
% F\left(t,y,r,D_y\psi, Y\right) -F\left(t,x,v, D_x\psi, X\right) \leq -2f(s, \vp,\vp',\vp'') 
%\end{equation}
%for all  $x, y \in \Omega$ with $d(x,y)=2s$, $u(x,t) -u(y,t)=\psi(x,y,t)$, and $X, Y \in S(n)$  satisfying 
%\begin{equation*}\label{}
%    \begin{pmatrix}
%    X & 0 \\ 0 & -Y 
%    \end{pmatrix}
%    \leq D^2_{x,y} \left(\psi(x,y,t) \right),
%\end{equation*}
where $\psi(x,y,t)=2\vp\left(\frac{d(x,y)}{2},t \right)$ and all derivatives of $\psi$ are evaluated at $(x,y,t)$. 
Note that $d(x,y)$ is in general not a smooth function, so \eqref{eq 6.3 structure mfd} should be understood in the ``viscosity sense", i.e., if $d$ is not smooth at $(x,y)$, then \eqref{eq 6.3 structure mfd} holds with 
$$\psi(x,y,t)=  2\vp\left(\frac{\rho(x,y)}{2},t \right) $$
for any smooth function $\rho$ touching $d$ from above at $(x,y)$. 
Finally, we point out that curvatures effect the one-dimensional operator $f$ as the Hessian of the distance function depends on curvatures.  

%we would only need to replace $|x-y|$ in \eqref{eq 1.3 structure} by $d(x,y)$. That is to say, the structure condition we impose reads: 
%In case $d(x,y)$ is not smooth at a point $(x,y)$, we shall replace it with a smooth test function $\rho$ defined near $(x,y)$ that touches $d(x,y)$ from above at $(x,y)$. More precisely, the structure condition we impose reads: 
%In case $d(x,y)$ is not  

%\begin{equation*}
%   \psi(x,y,t)= \begin{cases}
%2\vp\left(\frac{d(x,y)}{2},t \right) \text{ if $d$ is smooth at $(x,y)$,}  \\
%     2\vp\left(\frac{\rho(x,y)}{2},t \right)  \text{ if $d$ is not smooth at $(x,y)$}
%    \end{cases}
%\end{equation*}
%for any function $\rho$ touching $d$ from above at $(x,y)$. 

%$\psi(x,y,t)=2\vp\left(\frac{d(x,y)}{2},t \right)$ if $d$ is smooth at $(x,y)$, and $\psi(x,y,t)=2\vp\left(\frac{\rho(x,y)}{2},t \right)$ for any function $\rho$ touching $d$ at $(x,y)$ if $d$ is not smooth at $(x,y)$. 

% and $D^2_{x,y}$ means taking the Hessian with respect to all spatial variables.

%as a function of its arguments is in some sense a modulus of continuity of $F(t,x,r,p,X)$, as it measures the change of $F$ when its arguments change under the constrains specified above. 

We have the modulus of continuity estimates on manifolds. 
\begin{thm}\label{thm MC mfd}
Let $(M^n,g)$ be a closed Riemannian manifold of dimension $n\geq 2$ and diameter $D$. Suppose that $u:M \times [0,T) \to \R$ is a viscosity solution of 
\begin{equation}\label{eq 6.1 fully nonlinear mfd}
    u_t +F(t,x,u,Du,D^2 u)=0.
\end{equation}
Let $f(t,s,\vp,\vp',\vp'')$ be a one-dimensional operator satisfying \eqref{eq 6.3 structure mfd}. 
Then the modulus of continuity $\w$ of $u$ is a viscosity subsolution of 
\begin{equation*}
    \vp_t = f(t,s,\w,\w',\w'')
\end{equation*}
on $(0,D/2)\times (0,T)$ whenever $\w$ is increasing in $s$.%, where $D$ is the diameter of $M$. 
\end{thm}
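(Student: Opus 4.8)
The plan is to carry over the doubling-of-variables argument from the proof of Theorem~\ref{thm MC main} to the Riemannian setting; the only genuinely new difficulty is that the distance function $d$ fails to be smooth on the cut locus, and this is absorbed by the ``viscosity sense'' clause built into the structure condition~\eqref{eq 6.3 structure mfd}. By the definition of a viscosity subsolution it suffices to prove that, whenever a smooth $\vp$ touches $\w$ from above at a point $(s_0,t_0)\in(0,D/2)\times(0,T)$ (that is, $\vp\ge\w$ near $(s_0,t_0)$ with equality there), the one-dimensional viscosity subsolution inequality holds at $(s_0,t_0)$. Since $M$ is closed the supremum defining $\w(s_0,t_0)$ is attained, so there are $x_0,y_0\in M$ with $d(x_0,y_0)=2s_0$ and $u(x_0,t_0)-u(y_0,t_0)=2\w(s_0,t_0)=2\vp(s_0,t_0)$; as $0<s_0<D/2$ we have $x_0\ne y_0$, the two points are joined by a minimizing geodesic, and $(x_0,y_0)$ lies off the diagonal of $M\times M$. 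Using that $\w$ is increasing in $s$ (which in particular forces $\vp'(s_0,t_0)\ge0$) together with $\vp\ge\w$, one checks that $Z(x,y,t):=u(x,t)-u(y,t)-2\vp(d(x,y)/2,t)$ attains a local maximum equal to zero at $(x_0,y_0,t_0)$.

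The main obstruction is that $d$, hence the candidate test function $2\vp(d(x,y)/2,t)$, need not be smooth at $(x_0,y_0)$, so the parabolic maximum principle cannot be applied directly. Since $(x_0,y_0)$ lies off the diagonal, $d$ is locally semiconcave near it and therefore admits a smooth upper barrier $\rho$ on a neighborhood of $(x_0,y_0)$ with $\rho\ge d$ and $\rho(x_0,y_0)=d(x_0,y_0)$ --- concretely, a Calabi-type barrier built from fixed interior points of a minimizing geodesic from $x_0$ to $y_0$, using that the distance to those points is smooth near $x_0$ and near $y_0$. Replacing $d$ by $\rho$, the function $Z_\rho(x,y,t):=u(x,t)-u(y,t)-2\vp(\rho(x,y)/2,t)$ still attains a local maximum zero at $(x_0,y_0,t_0)$ (using $\rho\ge d$, $\w$ increasing, and $\vp\ge\w$), and now the test function $\psi(x,y,t):=2\vp(\rho(x,y)/2,t)$ is smooth, so the structure condition~\eqref{eq 6.3 structure mfd} is available in its form with this $\rho$.

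Now apply the Riemannian analogue of the parabolic maximum principle for semicontinuous functions (built on the manifold viscosity-solution theory of \cite{AFS08}, \cite{I}, \cite{Zhu11}; alternatively one may work in normal coordinates centred at $x_0$ and at $y_0$, where coordinate and covariant Hessians agree at the base point) to $Z_\rho = u(x,\cdot)+(-u)(y,\cdot)-\psi$. For each $\l>0$ one gets $b_1,b_2\in\R$, a symmetric bilinear form $X$ at $x_0$ and $Y$ at $y_0$, with $(b_1,D_x\psi,X)\in\overline{\mathcal{P}}^{2,+}u(x_0,t_0)$, $(-b_2,-D_y\psi,Y)\in\overline{\mathcal{P}}^{2,-}u(y_0,t_0)$, $b_1+b_2=\psi_t(x_0,y_0,t_0)=2\vp_t(s_0,t_0)$, and $\begin{pmatrix} X & 0 \\ 0 & -Y \end{pmatrix}\le D^2_{x,y}\psi+\l(D^2_{x,y}\psi)^2$ at $(x_0,y_0,t_0)$. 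Since $u$ solves~\eqref{eq 6.1 fully nonlinear mfd}, the subsolution inequality at $(x_0,t_0)$ and the supersolution inequality at $(y_0,t_0)$ give $b_1+F(t_0,x_0,u(x_0,t_0),D_x\psi,X)\le0$ and $-b_2+F(t_0,y_0,u(y_0,t_0),-D_y\psi,Y)\ge0$. Subtracting, letting $\l\to0^+$ so that $\begin{pmatrix} X & 0 \\ 0 & -Y \end{pmatrix}\le D^2_{x,y}\psi(x_0,y_0,t_0)$, and invoking~\eqref{eq 6.3 structure mfd} with $v=u(x_0,t_0)$ and $r=u(y_0,t_0)$ (so that $v-r=2\vp(s_0,t_0)>0$), we obtain $2\vp_t(s_0,t_0)=b_1+b_2\le F(t_0,y_0,\dots,Y)-F(t_0,x_0,\dots,X)\le-2f(t_0,s_0,\vp,\vp',\vp'')$, which is exactly the one-dimensional viscosity subsolution inequality at $(s_0,t_0)$, in the same form as in Theorems~\ref{thm MC main} and~\ref{thm MC Neumann}. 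This completes the proof.

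I expect the main obstacle to be the barrier construction and the verification that it is simultaneously compatible with the maximum-principle step --- the Hessian bound it yields for the pair $(X,Y)$ must be exactly the one constrained by~\eqref{eq 6.3 structure mfd} --- and with the viscosity-sense interpretation of~\eqref{eq 6.3 structure mfd}; the curvature of $M$ enters only implicitly, through $D^2_{x,y}\psi$, equivalently the Hessian of $d$, which is precisely the quantity~\eqref{eq 6.3 structure mfd} controls. A secondary issue requiring care is the precise manifold version of the parabolic maximum principle for semicontinuous functions, together with the bookkeeping of the sub/super-jets of $u$ at $x_0$ and $y_0$, in particular the sign of the covector at $y_0$, which is why $-D_y\psi$ appears in~\eqref{eq 6.3 structure mfd}.
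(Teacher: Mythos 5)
Your proposal is correct and follows essentially the same route as the paper: doubling of variables, replacing $d$ by a smooth function $\rho$ touching it from above at $(x_0,y_0)$ so that $\psi=2\vp(\rho/2,t)$ is an admissible test function, applying the manifold version of the parabolic maximum principle for semicontinuous functions, and closing with the structure condition \eqref{eq 6.3 structure mfd}. The extra detail you supply on the Calabi-type barrier is consistent with (and slightly more explicit than) the paper's ``let $\rho$ be any smooth function with $d\le\rho$ and equality at $(x_0,y_0)$.''
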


\begin{proof}
%Let $\vp$ be a smooth function $\vp$ touching $\w$ from above at $(s_0,t_0)$. Then we have 
%\begin{equation}\label{eq 6.10}
%    u(\gamma(0),t)-u(\gamma(2s_0),t) -2\vp \left(\frac{L[\gamma]}{2},t \right) \leq 0
%\end{equation}
%for any smooth path $\gamma:[0,2s_0] \to M$ whose length $L[\gamma]$ is close to $2s_0$ and for any $t\leq t_0$ close to $t_0$. Since $M$ is compact, there exists $x_0$ and $y_0$ in $M$ with $d(x_0,y_0)=2s_0$ such that 
%\begin{equation*}
%    u(x_0)-u(y_0)=2\w(s_0,t_0)=2\vp(s_0,t_0).
%\end{equation*}
%This implies that the equality in \eqref{eq 6.10} holds for $\gamma_0:[0,2s_0] \to M$, a minimizing unit speed geodesic from $x_0$ to $y_0$. 
The proof is a slight modification of that of Theorem \ref{thm MC main}. 
Let $\vp$ be a smooth function touching $w$ from above at $(s_0, t_0) \in (0,D/2)\times (0,T)$.
The assumption that $\omega$ is increasing in $s$ implies $\vp'(s_0,t_0) \geq 0$.
Since $M$ is compact, there exist $x_0$ and $y_0$ in $M$ with $d(x_0, y_0)=2s_0$ such that
$$u(x_0, t_0)-u(y_0, t_0)=2w(s_0,t_0)=2\vp(s_0,t_0).$$
For $x$ close to $x_0$, $y$ close to $y_0$, and $t$ close to $t_0$, it holds that 
\begin{equation*}
    u(x,t)-u(y,t) \leq 2\w\left(\frac{d(x,y)}{2},t\right) \leq 2\vp\left(\frac{d(x,y)}{2},t\right).
\end{equation*}
Thus the function
$$u(x,t)-u(y,t)-2\vp\left(\frac{d(x,y)}{2},t\right) $$
attains a local maximum at $(x_0,y_0,t_0)$. 

Now let $\rho$ be any smooth function satisfying  $d(x,y)\le \rho(x, y)$ near $(x_0,y_0)$ with equality at $(x_0, y_0)$. Since $\vp$ is non-decreasing, the function
$$Z(x, y, t):=u(x,t)-u(y, t)-\psi(x,y,t)$$
has a local maximum zero at $(x_0,y_0,t_0)$, where $\psi(x,y,t)=2\vp\left(\frac{\rho(x,y)}{2},t\right)$. 

By the parabolic maximum principle for semicontinuous functions on manifolds (see \cite[Section 2.2]{I} or \cite[Theorem 2.3]{LW17}), we have that for each $\lambda >0$, there exist $b_1, b_2 \in \R$, and $X \in \text{Sym}^2T_{x_0}^*M, Y \in \text{Sym}^2T_{y_0}^*M$ such that
\begin{align*}
    (b_1, D_x \psi, X) & \in \overline{\mathcal{P}}^{2,+} u(x_0,t_0),\\
    (-b_2, - D_y \psi, Y) & \in \overline{\mathcal{P}}^{2,-} u(y_0,t_0), \\
     b_1+b_2 & = \psi_t =2 \vp_t(s_0,t_0), \\
     \begin{pmatrix}
   X & 0 \\
   0 & -Y
   \end{pmatrix}
   & \leq M+\lambda M^2,
\end{align*}
%\begin{equation*}
%  \end{equation*}
% \begin{equation*}
% \end{equation*}
% \begin{equation*}
% \end{equation*}
%  \begin{equation}
%  \end{equation}
where $M=D^2 \psi$. Here are below, all derivatives of $\psi$ are evaluated at $(x_0,y_0,t_0)$ and all derivatives of $\vp$ are evaluated at $(s_0,t_0)$. 

Since $u$ is a viscosity solution of \eqref{eq 6.1 fully nonlinear mfd}, we have 
\begin{align*}
    b_1 +F(t_0,x_0,u(x_0,t_0), D_x\psi, X) \leq 0, \\
    -b_2 +F(t_0,y_0,u(y_0,t_0),-D_y\psi, Y) \geq 0.
\end{align*}
Therefore, we obtain by letting $\l \to 0^+$ that
\begin{align*}
    2\vp_t=b_1+b_2 & \leq F(t_0,y_0,u(y_0,t_0),-D_y\psi, Y) -F(t_0,x_0,u(x_0,t_0), D_x\psi, X) \\
    & \leq -2f(t_0, s_0,\vp,\vp',\vp''),
\end{align*}
where we have used the structure condition \eqref{eq 6.3 structure mfd} in the last inequality. This completes the proof. 
\end{proof}

\section{Effects of Curvatures on One-dimensional Operators}

On curved spaces, the Hessian of the distance function depends on sectional curvatures. The well known Hessian and Laplacian comparison theorems (see for example \cite{Sakai96}) provide sharp comparison with distance functions on spaces of constant sectional curvature $\k \in \R$, which are spheres ($\k >0$), Enclidean spaces ($\k=0$), and hyperbolic spaces ($\k <0$). To control the full Hessian of the distance function, one needs bounds on the sectional curvatures. However, if we only need to estimate the  Laplacian of the distance function from above, then lower bounds on Ricci curvature suffice.

We extract an example from \cite{AC13} to show how the curvatures effect the one-dimensional operators.

%by identifying $f$ for some $F$. 

\begin{prop}\label{prop identify 1D mfd}
Assume the Ricci curvature of $M$ is bounded from below by $(n-1)\k$ for some $\k\in \R$. Then the following pair of $f$ and $F$ satisfy the structure condition \eqref{eq 6.3 structure mfd}. 
%\begin{enumerate}
%\item 
%    \begin{equation*}\label{}
%        F(p,X)=-\tr \left[ \left(I- \frac{p \otimes p}{1+|p|^2}\right) X \right],
%\end{equation*}
%    \begin{equation*}
%        f=-\frac{\vp''}{1+(\vp')^2}.
%    \end{equation*}
%    \item 
    \begin{equation*}\label{}
        F(p,X)=-\tr \left[ \left(\a(|p|)\frac{p \otimes p}{|p|^2} +\b(|p|) \left(I- \frac{p \otimes p}{|p|^2}\right) \right) X \right],
\end{equation*}
where $\a$ and $\b$ are nonnegative functions. 
    \begin{equation*}
        f(\vp',\vp'')=-\a(\vp')\vp''+(n-1)\b(\vp')T_{\k},
    \end{equation*}
where the function $T_k$ is defined for $\k \in \R$ by
\begin{equation}\label{def T}
     T_\k(t)=\begin{cases}
   \sqrt{\k} \tan{(\sqrt{\k}t)}, & \k>0, \\
   0, & \k=0, \\
   -\sqrt{-\k}\tanh{(\sqrt{-\k}t)}, & \k<0.
    \end{cases}
\end{equation}
    % \item There exists a nonnegative function $\l(|p|)$ such that 
    % \begin{equation*}
    %    F(t,x,r,p,X+Z) -F(t,x,r,p,X) \leq -\lambda(|p|) \tr(Z) \text{ for } Z\geq 0. 
    %\end{equation*}
    % \begin{equation*}
    %     f=-\l(|\vp'|)\vp''+(n-1)\l(\vp')T_{\k}.
    % \end{equation*}
     %\item $F(t,x,r,p,X)$ is uniformly elliptic with ellipticity constants $0<\l \leq \Lambda < \infty$, namely for all $t,x,r,p$, it holds that
    %\begin{equation*}
    %    \l \tr(Z) \leq F(t,x,r,p,X-Z) -F(t,x,r,p,X) \leq \Lambda \tr(Z) \text{ for } Z\geq 0. 
    %\end{equation*}
    %\begin{equation*}
    %    f=-\l \vp''+(n-1)\b(\vp')T_{\k}.
    %\end{equation*}
    %\item 
    %\begin{equation*}
    %    F(p,X)=\tr \left[\left(I-\frac{p \otimes p}{|p|^2} \right) X \right],
    %\end{equation*}
    %\begin{equation*}
    %    f\equiv 0. 
    %\end{equation*}
%\end{enumerate}
\end{prop}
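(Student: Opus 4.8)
The plan is to verify the structure condition \eqref{eq 6.3 structure mfd} directly by adapting the computation of Part (2) of Proposition \ref{prop identify 1D} to the curved setting, the only new ingredient being the Hessian comparison theorem for the distance function. Let $x,y\in M$ with $d(x,y)=2s>0$, let $\gamma$ be a minimizing unit-speed geodesic from $y$ to $x$, and work at points where $d$ is smooth near $(x,y)$ (otherwise replace $d$ by a smooth upper support function $\rho$, whose Hessian still satisfies the comparison inequalities by the maximum principle characterization of cut points). As in the Euclidean case, choose orthonormal frames at $x$ and $y$ obtained by parallel transport along $\gamma$, with the first vector $e_1$ tangent to $\gamma$. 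With respect to these frames, $D_x\psi = \vp' e_1$ at $x$ and $-D_y\psi=\vp' e_1$ at $y$ (pointing outward along the geodesic), where all derivatives of $\vp$ are evaluated at $(s,t)$.

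The key step is to bound $D^2_{x,y}\psi$. Writing $\psi(x,y,t)=2\vp(\rho(x,y)/2,t)$, a direct computation gives
\begin{equation*}
D^2_{x,y}\psi = \tfrac12\vp''\, (d\rho\otimes d\rho) + \vp'\, D^2_{x,y}\rho,
\end{equation*}
and $d\rho$ restricted to the $(x,y)$ variables is $(e_1,-e_1)$ at $(x_0,y_0)$. The Hessian comparison theorem gives, in the chosen frames, that $D^2_{x,y}\rho$ applied to a vector $(v,w)$ with $v\in T_xM$, $w\in T_yM$ is bounded above, in the directions orthogonal to $\gamma$, by $\ctk(2s)\,|v-w|^2$ when sectional curvature is bounded below, but since here we only assume $\Ric\ge (n-1)\k$, what survives is the trace estimate: $\tr$ of the orthogonal part of $D^2_{x,y}\rho$ is at most $(n-1)\ctk(2s)$ where $\ctk$ is the relevant $\cot/\coth$ function, equivalently $-T_\k(s)$ after the half-angle substitution coming from $\rho/2$. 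The tangential ($e_1$) direction contributes exactly $\vp''$ to the trace as in the flat case.

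Then I would run the matrix-trace argument of Proposition \ref{prop identify 1D}(2): set $A=\mathrm{diag}(\a(\vp'),\b(\vp'),\dots,\b(\vp'))$ and choose the off-diagonal block $C$ so that $\begin{pmatrix}A & C\\ C & A\end{pmatrix}\ge 0$; precisely $C=\mathrm{diag}(-\a(\vp'),\b(\vp'),\dots,\b(\vp'))$. From $\begin{pmatrix}X&0\\0&-Y\end{pmatrix}\le D^2_{x,y}\psi$ we get
\begin{equation*}
F(\vp' e_1,Y)-F(\vp' e_1,X)=\tr\!\left[\begin{pmatrix}A&C\\ C&A\end{pmatrix}\begin{pmatrix}X&0\\0&-Y\end{pmatrix}\right]\le \tr\!\left[\begin{pmatrix}A&C\\ C&A\end{pmatrix}D^2_{x,y}\psi\right].
\end{equation*}
Expanding the right-hand side, the $e_1$-block produces $2\a(\vp')\vp''$ exactly as before, while the $(n-1)$ orthogonal directions contribute $2\b(\vp')$ times the trace of the orthogonal part of $D^2_{x,y}\psi$, which by the previous step is at most $2\b(\vp')\cdot(n-1)(-T_\k(s))\cdot(\text{from }\vp'>0)$; combining gives the bound $\le -2\big(-\a(\vp')\vp''+(n-1)\b(\vp')T_\k(s)\big)$, i.e. $\le -2f(\vp',\vp'')$, which is \eqref{eq 6.3 structure mfd}. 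The main obstacle is handling the non-smoothness of $d$: one must justify that it suffices to test against smooth upper barriers $\rho$ and that the Laplacian (trace-of-Hessian) comparison still applies to such $\rho$ with the same constant; this is standard — at a point where $x,y$ are not conjugate along a unique minimizer, $d$ is smooth, and at cut points one uses that the comparison inequality passes to any smooth majorant because the Hessian of a majorant at a touching point dominates that of $d$ in the barrier sense. A minor point to track is the sign and the factor-of-two bookkeeping in passing from $\ctk(2s)$ for $\rho$ to $T_\k(s)$ for $\vp(\rho/2)$, and the restriction $s\in(0,D/2)$ together with monotonicity of $\w$ (hence $\vp'\ge 0$) is exactly what makes the orthogonal terms point the right way.
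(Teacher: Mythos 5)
Your overall architecture is the same as the paper's: parallel frames along the minimizing geodesic, the positive semidefinite block matrix $\begin{pmatrix} A & C\\ C & A\end{pmatrix}$ with $C=\mathrm{diag}(-\a,\b,\dots,\b)$ (equivalently, testing $\begin{pmatrix} X & 0\\ 0 & -Y\end{pmatrix}\le D^2_{x,y}\psi$ on the vectors $(e_1,-e_1)$ and $(e_i,e_i)$), the tangential direction giving $2\a(\vp')\vp''$, and the orthogonal directions giving the curvature term. But the one step that carries all the new content of this proposition — the curvature estimate — is justified incorrectly. You claim that the trace of the orthogonal part of $D^2_{x,y}\rho$ is at most $(n-1)$ times a $\cot/\coth$-type function of $2s$, ``equivalently $-T_\k(s)$ after the half-angle substitution coming from $\rho/2$.'' This equivalence is false: $\sqrt{\k}\cot(2\sqrt{\k}s)$ and $-\sqrt{\k}\tan(\sqrt{\k}s)$ are different functions (for $\k>0$ they even have opposite signs for small $s$), and the factor $\rho/2$ inside $\vp$ only affects the $\vp'$, $\vp''$ bookkeeping, not the argument of the comparison function. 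What is actually needed is the \emph{two-point} estimate $\sum_{i=1}^{n-1} D^2_{x,y}d\big((e_i,e_i),(e_i,e_i)\big)\le -2(n-1)T_\k(d/2)$ under $\Ric\ge (n-1)\k$. This does not follow from the one-point Hessian/Laplacian comparison applied at each endpoint (that route produces $\cot$-type bounds of the wrong sign and loses the crucial cross terms); it is proved by a synchronized second-variation argument along $\gamma$ with model multiplier functions, which is exactly the ingredient the paper imports from Andrews--Clutterbuck \cite{AC13} and \cite{LW17}. Your proposal never invokes this, so the key inequality is asserted rather than proved.

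A second, related problem is your treatment of the non-smoothness of $d$: you argue that the comparison ``passes to any smooth majorant because the Hessian of a majorant at a touching point dominates that of $d$.'' That inequality goes the wrong way for your purposes — $D^2\rho\ge D^2 d$ at a touching point gives a \emph{lower} bound on $D^2\rho$, whereas the structure condition requires an \emph{upper} bound on $D^2\rho$ in the $(e_i,e_i)$ directions (indeed a majorant such as $d$ plus a large quadratic has arbitrarily large Hessian). The standard fix, and what the cited comparison actually provides, is to work with $d$ itself off the cut locus and, at cut points, with an explicit Calabi-type upper barrier built from the lengths of a concrete variation of curves, whose second variation is computable and satisfies the same $-2(n-1)T_\k(s)$ bound; this specific barrier is then the $\rho$ fed into the maximum-principle argument of Theorem \ref{thm MC mfd}. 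Finally, track the first-order factor: the orthogonal contribution is $\vp'\sum_i D^2\rho((e_i,e_i),(e_i,e_i))\le -2(n-1)T_\k\,\vp'$, so the derivation yields $f=-\a(\vp')\vp''+(n-1)T_\k\,\vp'\,\b(\vp')$ (as in the paper's proof; the factor $\vp'$ is dropped in your final display, and monotonicity $\vp'\ge0$ is what makes this term usable, not a reason to discard it).
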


%\begin{lemma}\label{lemma mfd}
%Assume the Ricci curvature of $M$ is bounded from below by $(n-1)\k$ for some $\k\in \R$. Let $X$ and $Y$ be as in the structure condition \eqref{eq 6.3 structure mfd}. Then 
%\begin{equation*}
%    \tr(X-Y) \leq 
%\end{equation*}
%\end{lemma}

\begin{proof}[Proof of Proposition \ref{prop identify 1D mfd}]
The proof is essentially the same as in \cite{AC13}. 
Let $x,y\in M$ be such that $d(x,y)=2s>0$ and $\gamma:[0,1] \to M$ be a length-minimizing geodesic connecting $x$ and $y$ with $|\gamma'|=2s$. Choose an orthonormal frame $\{e_i(0)\}_{i=1}^n$ for $T_{x_0}M$ with $e_n(0)=\gamma'(0)$ and parallel translate it along $\gamma$ to produce orthonormal frame $\{e_i(s)\}_{i=1}^n$ for $T_{\gamma(s)}M$ with $e_n(s)=\gamma'(s)$ for all $s\in [0,1]$. 

Let $\rho$ be a smooth function touching $d$ from above at $(x,y)$ and write $\psi(x,y,t)=2\vp\left(\frac{\rho(x,y)}{2},t \right)$. Direct calculation shows that
\begin{align*}
    D_x\psi(x,y,t) &=\vp'(s,t)\gamma'(0), \\
    D_y\psi(x,y,t) &=-\vp'(s,t)\gamma'(1).
\end{align*}
Therefore, we have 
\begin{eqnarray*}
&& F(-D_y\psi, Y)-F(D_x \psi, X) \\
&=&  -\a(\vp') Y(e_n(1),e_n(1)) -\b(\vp')\sum_{i=1}^{n-1}Y(e_i(1),e_i(1)) \\
&& +\a(\vp') X(e_n(0),e_n(0)) +\b(\vp')\sum_{i=1}^{n-1}X(e_i(0),e_i(0)) \\
&=& \a(\vp') \begin{pmatrix}
   X & 0 \\
   0 & -Y
   \end{pmatrix} \left((e_n(0),-e_n(1)), (e_n(0),-e_n(1)) \right) \\
  && +\b(\vp') \sum_{i=1}^{n-1}\begin{pmatrix}
   X & 0 \\
   0 & -Y
   \end{pmatrix} \left((e_i(0),e_i(1)), (e_i(0),e_1(1)) \right)\\
   &\leq& \a(\vp') D^2 \psi \left((e_n(0),-e_n(1)), (e_n(0),-e_n(1)) \right) \\
  && +\b(\vp') \sum_{i=1}^{n-1} D^2 \psi \left((e_i(0),e_i(1)), (e_i(0),e_1(1)) \right).\\
\end{eqnarray*}
It is easy to calculate that 
\begin{eqnarray*}
&& D^2 \psi \left((e_n(0),-e_n(1)), (e_n(0),-e_n(1)) \right) \\
&=& \vp'' D\rho \otimes D\rho \left((e_n(0),-e_n(1)), (e_n(0),-e_n(1)) \right) \\
&& +\vp' D^2 \rho \left((e_n(0),-e_n(1)), (e_n(0),-e_n(1)) \right) \\
&=& 2\vp''
\end{eqnarray*}
and 
\begin{eqnarray*}
&& \sum_{i=1}^{n-1} D^2 \psi \left((e_i(0),e_i(1)), (e_i(0),e_1(1)) \right) \\
&=&\vp'' \sum_{i=1}^{n-1} D \rho \otimes D \rho \left((e_i(0),-e_i(1)), (e_i(0),-e_i(1)) \right) \\
&& +\vp'\sum_{i=1}^{n-1}  D^2 \rho \left((e_i(0),-e_i(1)), (e_i(0),-e_i(1)) \right) \\
&\leq& -2 (n-1)T_\k \vp',
\end{eqnarray*}
where we have used the Laplacian comparison theorem in the last inequality (see for example \cite[page 1018]{AC13} or \cite[page 564]{LW17}). 
Combining the above estimates, we obtain 
\begin{equation*}
    F(-D_y\psi, Y)-F(D_x \psi, X) \leq -2\left(\a(\vp')\vp'' -(n-1)T_\k \vp' \b(\vp') \right).
\end{equation*}
Thus $f= -\a(\vp')\vp'' +(n-1)T_\k \vp' \b(\vp')$ satisfies \eqref{eq 6.3 structure mfd}. 

%(2). The proof is similar to that of (1). We have 
%\begin{eqnarray*}
%&& F(t,y,r,D_y\psi,Y)-F(t,x,v,D_x\psi, X) \\
%&\leq & F(t,y,v,D_y\psi,Y)-F(t,x,v,D_x\psi, X)\\
%& \leq & \l(\vp') \tr(X-Y) \\
%&=& \l(\vp') \sum_{i=1}^n \begin{pmatrix}
%   X & 0 \\
%   0 & -Y
%   \end{pmatrix} \left((e_1(0),-e_1(1)), (e_1(0),-e_1(1)) \right) \\
%   &&+ \l(\vp') \sum_{i=1}^n \begin{pmatrix}
%   X & 0 \\
%   0 & -Y
%   \end{pmatrix} \left((e_i(0),e_i(1)), (e_i(0),e_1(1)) \right) \\
%&\leq& \l(\vp') D^2 \psi \left((e_n(0),-e_n(1)), (e_n(0),-e_n(1)) \right) \\
%  && +\l(\vp') \sum_{i=1}^{n-1} D^2 \psi \left((e_i(0),e_i(1)), (e_i(0),e_1(1)) \right).
%\end{eqnarray*}
%We can proceed as in (1) to get 
%\begin{equation*}
%    F(t,y,r,D_y\psi,Y)-F(t,x,v,D_x\psi, X) \leq 2\l(\vp') \left(\vp'' -(n-1)T_\k \vp' \right). 
%\end{equation*}
%This finishes the proof. 
\end{proof}

Analogous results hold for other examples given in Proposition \ref{prop identify 1D} and \ref{prop 1D identify} (Part (4) of Proposition \ref{prop identify 1D} requires nonnegative sectional curvature). As applications, we obtain the same Lipschitz bounds and gradient estimates as in Section \ref{Sec Lip} provided that $M$ has nonnegative Ricci curvature.

%\newpage
%\bigskip

\bibliographystyle{plain}
\bibliography{ref}

\end{document}